\newtheorem{theorem}{Theorem}
\newtheorem{proposition}[theorem]{Proposition}
\newtheorem{lemma}[theorem]{Lemma}
\newtheorem{definition}{Definition}
\newtheorem{corollary}{Corollary}
\newtheorem{conjecture}{Conjecture}
\newcommand{\HP}{\mathcal{HP}}
\newcommand{\bR}{\mathbb{R}}
\newcommand{\bC}{\mathbb{C}}
\newcommand{\ee}{\end{equation}}
\newcommand {\al}{\alpha}
\newcommand {\la}{\lambda}
\newcommand{\be}{\beta}
\newcommand{\mesh}{\mathop{\mathrm{mesh}}}
\newcommand {\de}{\delta}
\newcommand {\ga}{\gamma}
\newcommand{\A}{\mathcal A}
\begin{document}
\title[Elements of P\'olya-Schur theory in finite difference setting]{Elements of P\'olya-Schur  theory in finite difference setting}

\author[P.~Br\"and\'en]{Petter Br\"and\'en}
\address{Department of Mathematics, Royal Institute of Technology, SE-100 44 Stockholm,
Sweden}
\email{pbranden@kth.se}

\author[I. Krasikov]{Ilia Krasikov}

\address{   Department of Mathematical Sciences,
            Brunel University,
            Uxbridge UB8 3PH United Kingdom}
\email{mastiik@brunel.ac.uk}

\author[B. Shapiro]{Boris Shapiro}

\address{   Department of Mathematics,
            Stockholm University,
            S-10691, Stockholm, Sweden}
\email{shapiro@math.su.se}

\begin{abstract}
The P\'olya-Schur theory describes the class of hyperbolicity preservers, i.e., the linear operators on univariate polynomials preserving real-rootedness.
We  attempt  to develop an analog of P\'olya-Schur theory  in the setting of linear finite difference operators. We study  the class of linear finite difference operators preserving the set of  real-rooted polynomials whose  mesh (i.e., the minimal distance between the roots) is at least  one. In particular,   finite difference version of the classical Hermite-Poulain theorem and several results about discrete multiplier sequences are obtained. 
\end{abstract}

\maketitle

\section{Introduction}
The systematic study of linear operators acting on $\mathbb R[x]$ and sending real-rooted polynomials to real-rooted polynomials  was initiated in the 1870's by C.~Hermite and  later continued by E.~Laguerre. Its classical period culminated in 1914 with the publication of the outstanding paper \cite {PolyaSchur} where G.~P\'olya and I.~Schur completely characterized all such linear operators acting diagonally on the standard monomial basis $1, x, x^2, \ldots$ of $\bR[x]$. 
This article generated  a  substantial amount of related literature with  contributions by e.g. N.~Obreschkov, S.~Karlin, B.~Ya.~Levin, G.~Csordas, T.~Craven,  A.~Iserles, S.~P.~N\o rsett, E.~B.~Saff, and, recently by the first author together with 
 the late J.~Borcea. 

Although several variations of the original set-up have been considered (including complex zero decreasing sequences, real-rooted polynomials on finite intervals, stable polynomials etc.) it seems that its natural finite difference analog discussed below has so far escaped the attention of the specialists in the area.  An exception is \cite{Fisk}. 

Denote by $\HP\subset \mathbb {R}[x]$ the set of all real-rooted  (also referred to as {\em hyperbolic}) polynomials. A linear operator $T:\bR[x]\to\bR[x]$ is called a {\em real-rootedness preserver} or a 
{\em hyperbolicity preserver} if it preserves $\HP$. 
Given a real-rooted polynomial $p(x)\in \HP$ denote by $\mesh(p)$ its mesh; i.e., the minimal distance between its roots. If a real-rooted $p(x)$ has a multiple root, then by definition $\mesh(p):=0$. Polynomials of degree at most $1$ are defined to have  mesh equal to $+\infty$. Denote by $\HP_{\ge \alpha}\subset  \HP$ the set of all real-rooted polynomials whose mesh is at least $\alpha \geq 0$. 
Let $\HP^{+}_{\ge \alpha} \subset \HP_{\ge \alpha}$ be the subset of such polynomials with only non-negative zeros.

One of rather few known results about linear operators not decreasing the mesh is due originally to M.~Riesz and deserves to be better known, see e.g. \cite{Fisk,Sto}.

\begin{theorem}\label{th:Riesz}  For any hyperbolic polynomial $p$ and any real $\lambda$, 
$$\mesh(p-\lambda p') \ge \mesh(p).$$
\end{theorem}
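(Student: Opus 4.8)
The plan is to deduce the mesh inequality from two structural facts about the operator $T:=1-\la\,\frac{d}{dx}$, namely that it preserves hyperbolicity and interlacing, combined with the observation that $T$ has constant coefficients and therefore commutes with translations. First I would clear away the trivial cases. If $\mesh(p)=0$ there is nothing to prove once we know that $p-\la p'\in\HP$, and the substitution $x\mapsto-x$ turns $\la$ into $-\la$ while leaving every mesh unchanged, so we may assume $\la\ge 0$. Writing $\mu:=\mesh(p)>0$, I would fix an arbitrary $m$ with $0<m<\mu$; it then suffices to prove $\mesh(p-\la p')\ge m$ and afterwards let $m\uparrow\mu$, which conveniently avoids all boundary degeneracies.

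The geometric heart of the argument is a reformulation of the mesh condition via interlacing. For $f\in\HP$ with simple zeros $w_1<\cdots<w_n$, the shifted polynomial $f(x+m)$ has zeros $w_i-m$, and comparing the two ordered zero sequences shows that $\mesh(f)\ge m$ is equivalent to $f(x)$ and $f(x+m)$ having interlacing zeros, with strict interlacing when $\mesh(f)>m$. Applied to $p$ with $m<\mu$, this gives that $p(x)$ and $p(x+m)$ strictly interlace.

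Next I would invoke two classical facts. The first is that $T=1-\la\frac{d}{dx}$ preserves $\HP$: for $\la\neq0$ the zeros of $p-\la p'$ are exactly the critical points of $e^{-x/\la}p(x)$, and since the logarithmic derivative $p'/p-1/\la$ decreases strictly and has a simple pole between consecutive zeros of $p$, Rolle's theorem places one zero of $p-\la p'$ in each gap and one more beyond the extreme zero, so $p-\la p'$ is real-rooted (this is the most elementary Hermite--Poulain type statement). The second is that any linear operator preserving $\HP$ automatically preserves interlacing: by Obreschkoff's theorem two real polynomials interlace precisely when all their real linear combinations lie in $\HP$, and this property is manifestly inherited through a linear $\HP$-preserver. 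Combining the two, the interlacing of $p(x)$ and $p(x+m)$ yields that $Tp$ and $T[p(x+m)]$ interlace. Now the translation-equivariance $T[p(x+m)]=(Tp)(x+m)$ holds exactly because $T$ has constant coefficients and hence commutes with the shift $x\mapsto x+m$; so $(Tp)(x)$ and $(Tp)(x+m)$ interlace, and the converse direction of the mesh--interlacing reformulation gives $\mesh(Tp)\ge m$. Letting $m\uparrow\mu$ produces $\mesh(p-\la p')\ge\mesh(p)$.

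I expect the main obstacle to lie in the careful handling of the interlacing step and the accompanying degree bookkeeping: one must ensure that $Tp$ and $(Tp)(x+m)$ genuinely interlace rather than degenerating, which is guaranteed here since both have degree $n$ and the same leading coefficient, and one must confirm that the Obreschkoff characterization is applied to the full family $\al\,p+\be\,p(x+m)$ of hyperbolic polynomials (not merely the simple-rooted ones), which is why the unrestricted preservation of $\HP$ by $T$ is essential. The conceptual content is that a constant-coefficient hyperbolicity preserver cannot decrease the mesh, the translation trick being what extracts the mesh bound; I anticipate that this same scheme will recur for the other mesh-non-decreasing operators studied later in the paper.
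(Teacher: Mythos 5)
Your argument is correct. Note, however, that the paper does not actually prove Theorem~\ref{th:Riesz}: it is quoted as a known result of M.~Riesz with references to Fisk and Stoyanoff, and the paper's own effort goes into the finite difference analog, Theorem~\ref{th:FDRiesz}. Your proof is essentially the continuous counterpart of that argument, and the comparison is instructive. Both proofs rest on the same two pillars: the reformulation $\mesh(f)\ge m \iff f(x)$ and $f(x+m)$ interlace (the paper phrases this as $p\in\HP_{\ge\al}\iff p(x)\ll p(x-\al)$), and the translation-equivariance of a constant-coefficient operator. Where you differ is in how interlacing-preservation is obtained: you get it for free from the Hermite--Kakeya--Obreschkoff characterization of interlacing as hyperbolicity of all linear combinations, because $1-\la\,d/dx$ preserves all of $\HP$; the paper cannot do this for $p(x)-\la p(x-\al)$, since that operator only preserves $\HP_{\ge\al}$ and not $\HP$ (cf.\ Lemma~\ref{lm:1}), so it instead proves preservation of proper position by hand, via the convex-cone Lemma~\ref{cone} and a chain of elementary factorizations $(x-a_i)r_i\ll(x-b_i)r_i$. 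Your shortcut is cleaner in the continuous setting but would not transfer to the discrete one. The details you flag as delicate are indeed the right ones and they do go through: the combinations $\al\,p(x)+\be\,p(x+m)$ are never identically zero for $(\al,\be)\ne(0,0)$ since both polynomials share the leading coefficient and $p(x)\not\equiv p(x+m)$; and in the final step the interlacing of the zero multiset of $Tp$ with its own translate by $m>0$ automatically rules out multiple zeros and forces consecutive gaps of at least $m$, so the converse direction of your reformulation applies without a simplicity hypothesis on $Tp$.
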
 

Recall that the well-known Hermite-Poulain theorem \cite[p.~4]{obr} claims that a finite order linear  differential operator  
$T=a_0+a_1{d}/{dx}+\cdots+a_k{d^k}/{dx^k}$ with constant coefficients is hyperbolicity preserving  if and only if its {\it symbol polynomial} $Q_T(t)=a_0+a_1t+\cdots+a_kt^k$ is hyperbolic. Thus  Theorem~\ref{th:Riesz} combined with the Hermite-Poulain theorem imply the following statement.  

\begin{corollary}\label{cor:1}
A hyperbolicity preserving differential operator with constant coefficients does not decrease the mesh of hyperbolic polynomials. 
\end{corollary}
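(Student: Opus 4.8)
The plan is to reduce the statement about an arbitrary hyperbolicity-preserving differential operator with constant coefficients to an iterated application of Theorem~\ref{th:Riesz}. By the Hermite--Poulain theorem, such an operator $T = a_0 + a_1 d/dx + \cdots + a_k d^k/dx^k$ has hyperbolic symbol polynomial $Q_T(t) = a_0 + a_1 t + \cdots + a_k t^k$. The first step is to factor $Q_T$ over $\bR$: since $Q_T$ is hyperbolic, we may write $Q_T(t) = c\prod_{j=1}^{m}(t - r_j)$ with $c \in \bR$ and each $r_j \in \bR$ (allowing repetitions), where $m = \deg Q_T \le k$. The key observation is that the substitution $t \mapsto d/dx$ respects this factorization: $T = c\prod_{j=1}^{m}\bigl(\tfrac{d}{dx} - r_j\bigr)$, because differential operators with constant coefficients commute, so the order of the factors is immaterial and the product of the linear symbols equals the symbol of the product.

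Next I would rewrite each linear factor in a form to which Theorem~\ref{th:Riesz} applies. If $r_j = 0$ the factor is just $d/dx$, and differentiation takes a hyperbolic polynomial of degree $n$ to a hyperbolic polynomial whose mesh is at least that of the original (this is the classical fact that $\mesh(p') \ge \mesh(p)$, which is itself the $\lambda \to \infty$ limiting case of Theorem~\ref{th:Riesz}, or can be cited directly). If $r_j \ne 0$, write $\tfrac{d}{dx} - r_j = -r_j\bigl(1 - \tfrac{1}{r_j}\tfrac{d}{dx}\bigr)$; applying this to a polynomial $p$ yields $-r_j\bigl(p - \lambda p'\bigr)$ with $\lambda = 1/r_j$, and the nonzero scalar $-r_j$ does not affect the roots or the mesh. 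Theorem~\ref{th:Riesz} then gives $\mesh\bigl(\tfrac{d}{dx}p - r_j p\bigr) = \mesh(p - \lambda p') \ge \mesh(p)$.

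Finally I would assemble these steps: starting from a hyperbolic $p$, apply the linear factors one at a time. Each factor sends a hyperbolic polynomial to a hyperbolic polynomial (the image is nonzero as long as the degree does not drop to a constant, and degree-$\le 1$ polynomials have infinite mesh by convention, so the inequality holds trivially in that degenerate case) and does not decrease the mesh. By induction on $m$, the composite $T$ does not decrease the mesh, and the overall nonzero constant $c$ is harmless. I do not anticipate a serious obstacle here; the only points requiring a little care are the bookkeeping for degenerate cases where the degree collapses (handled by the mesh conventions for low-degree polynomials stated in the introduction) and the explicit verification that the symbol of a product of constant-coefficient operators is the product of their symbols, which is routine.
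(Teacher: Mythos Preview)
Your proposal is correct and follows exactly the route the paper indicates: factor the symbol via Hermite--Poulain into real linear factors and apply Theorem~\ref{th:Riesz} to each factor in turn (with the $r_j=0$ case handled by the classical $\mesh(p')\ge\mesh(p)$, itself the limiting case of Riesz). The paper does not spell out these details, but your write-up is precisely the intended argument.
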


\medskip
Our first goal  is to find an analog of  Corollary~\ref{cor:1}  in the finite difference context. 
We  consider  the action on $\bC[x]$ of   linear finite difference operators $T$ with polynomial coefficients; i.e.,  operators of the form:
\begin{equation}\label{eq:basic}
T(p)(x)=q_0(x)p(x)+q_1(x)p(x-1)+\cdots+q_k(x)p(x-k),
\end{equation}
where $q_0(x),\ldots,q_k(x)$ are  fixed complex- or real-valued polynomials.  If $q_k(x)\not \equiv 0$  we say that $T$ has order $k$. 
Although no  non-trivial  $T$ as in \eqref{eq:basic} preserves $\HP$, (see Lemma~\ref{lm:1} below) it can nevertheless   preserve $\HP_{\ge 1}$. The simplest example of such  an operator  is 
$$\Delta(p(x))=p(x)-p(x-1)$$
 which is a discrete analog of ${d}/{dx}$, see Fig.~1.  

\begin{figure}

\begin{center}
\includegraphics[scale=0.75]{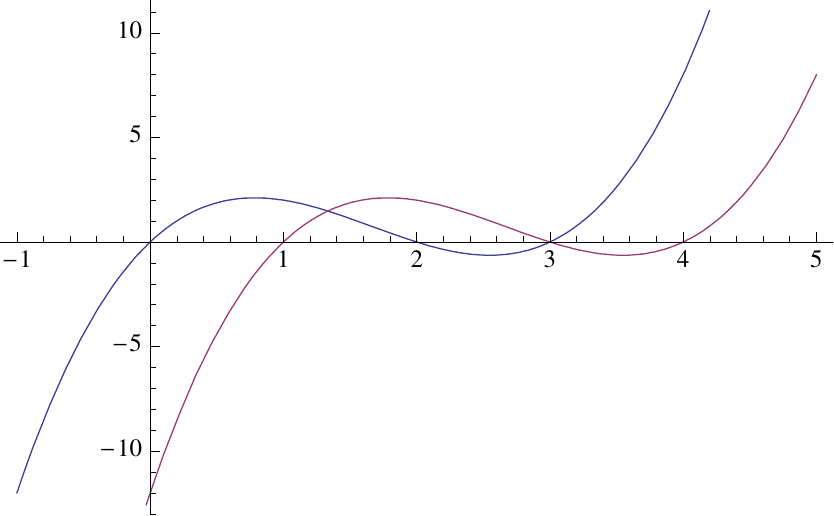}
\end{center}


\caption{Roots of $\Delta(p(x))=p(x)-p(x-1)$ are the $x$-coordinates of the intersection points between the graphs of $p(x)$ and $p(x-1)$.}
\label{fig1}
\end{figure}

\begin{definition}
 A linear finite difference operator \eqref{eq:basic} is called a  {\bf discrete hyperbolicity preserver} if it preserves $\HP_{\ge 1}$. 
 \end{definition}
Obviously,  the set of all discrete hyperbolicity preservers is a semigroup with respect to composition.  We start with a  finite difference analog of  Theorem~\ref{th:Riesz}. A similar result was proved by S.~Fisk in \cite[Lemma 8.27]{Fisk}. 

\begin{theorem}\label{th:FDRiesz} For  positive real numbers  $\al$ and  $\lambda$,  define 
an  operator $T$ by 
$$T(p)(x)= p(x)-\lambda p(x-\al).$$ Then for any  hyperbolic polynomial $p\in \HP_{\ge \alpha}$,  
$$\mesh(T(p)) \ge \mesh(p).$$ 
Moreover if $\lambda \ge 1$, then   $T$ preserves the set $\HP_{\ge \alpha}^{+}$. 
\end{theorem}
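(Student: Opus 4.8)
The plan is to analyze the roots of $T(p)(x) = p(x) - \lambda p(x-\alpha)$ by looking at the graphs of $p(x)$ and $\lambda p(x-\alpha)$ and counting intersections, as Figure~1 suggests. I would write $p(x) = c\prod_{i=1}^n (x-x_i)$ with $x_1 \le x_2 \le \cdots \le x_n$ and $x_{i+1} - x_i \ge \alpha$ for all $i$; let $m = \mesh(p)$, so in fact $x_{i+1} - x_i \ge \max(\alpha, m)$... wait, I should be careful: $m \ge \alpha$ by hypothesis, so consecutive roots are separated by at least $m$. The key observation is that on each interval $(x_i, x_{i+1})$ the polynomial $p$ has constant sign, and the shifted polynomial $p(x-\alpha)$ has its roots at $x_i + \alpha$, which all lie in $[x_1+\alpha, x_n+\alpha]$; since the gaps of $p$ are at least $\alpha$, each shifted root $x_i+\alpha$ lies in the interval $[x_i, x_{i+1}]$ (the $i$-th gap of $p$, possibly at its endpoint).

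First I would handle the generic case where all roots are simple and the shifts are "in general position" (no $x_i + \alpha$ equals some $x_j$), then recover the boundary cases by a limiting/continuity argument on $\mesh$, which is a continuous function of the coefficients on the locus of hyperbolic polynomials. In the generic case, consider the ratio $r(x) = p(x)/p(x-\alpha)$, a rational function whose zeros are the $x_i$ and whose poles are the $x_i + \alpha$. The roots of $T(p)$ are exactly the solutions of $r(x) = \lambda$. Between a consecutive zero $x_i$ of $r$ and the next pole $x_i+\alpha$ (which we showed lies in $[x_i,x_{i+1})$), and between that pole and the next zero $x_{i+1}$, one studies the monotonicity of $r$; the derivative $r'/r = p'/p - p'(x-\alpha)/p(x-\alpha) = \sum_i (1/(x-x_i) - 1/(x-x_i-\alpha))$ is negative wherever it is defined on $(x_1, x_n+\alpha)$ because each summand $1/(x-x_i) - 1/(x-x_i-\alpha) = -\alpha/((x-x_i)(x-x_i-\alpha))$ is negative there... one has to check signs interval by interval, but this is the engine that forces $r$ to be strictly monotone on each interval between consecutive poles/zeros, hence to cross the level $\lambda > 0$ exactly once in each interval where $r$ runs from $0$ to $+\infty$ or from $+\infty$ to $0$, plus possibly once more outside $[x_1, x_n+\alpha]$. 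Counting these crossings gives $n = \deg T(p)$ real roots (so $T(p)$ is hyperbolic), and—this is the quantitative part—each root of $T(p)$ is pinned inside a subinterval of a gap of $p$, so consecutive roots of $T(p)$ are at least $\mesh(p)$ apart.

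The main obstacle, and the step I expect to require the most care, is the precise interlacing/location bookkeeping: showing that the unique root of $T(p)$ in each relevant interval, together with the exceptional root beyond the extreme zero/pole, is positioned so that the minimal gap does not drop below $m = \mesh(p)$. The cleanest route is probably to prove that the roots of $T(p)$ interlace with an appropriately shifted copy of the roots of $p$ (each root of $T(p)$ lies in $[x_i, x_i+\alpha] \subseteq [x_i, x_{i+1}]$ for the appropriate $i$, or symmetrically in $[x_i-\alpha+\alpha,\dots]$—one must pick the right normalization depending on the sign of $\lambda-$something), and then a short convexity estimate on where in the gap the crossing occurs. For the final assertion, when $\lambda \ge 1$ and $p \in \HP_{\ge\alpha}^+$, I would note that $T(p)(0) = p(0) - \lambda p(-\alpha)$ has a sign forced by $p$ having all nonnegative roots (so $p(0)$ and $p(-\alpha)$ have the same sign and $|p(-\alpha)| \ge |p(0)|$ since shifting left moves away from all roots, combined with $\lambda \ge 1$), which shows no root of $T(p)$ enters $(-\infty, 0)$; more carefully, for $x < 0$ one checks $|p(x)| < |p(x-\alpha)| \le \lambda|p(x-\alpha)|$ term by term in the product, so $T(p)(x) \ne 0$ there, and combined with hyperbolicity and the mesh bound already established, $T(p) \in \HP_{\ge\alpha}^+$.
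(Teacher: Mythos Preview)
Your direct root--counting via the rational function $r(x)=p(x)/p(x-\alpha)$ does work for the \emph{qualitative} part: on each interval $(x_i+\alpha,x_{i+1})$ (as well as the appropriate unbounded interval) every summand of $(\log r)'$ is indeed negative, so $r$ is strictly decreasing there and you locate exactly one root $y_i$ of $T(p)$ in each such interval. This already shows $T(p)\in\HP_{\ge\alpha}$, and your argument for the ``moreover'' clause (comparing $|p(x)|$ and $\lambda|p(x-\alpha)|$ factor by factor for $x<0$) is correct and is exactly what the paper does.

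The genuine gap is the quantitative step $\mesh(T(p))\ge\mesh(p)$. From $y_i\in(x_i+\alpha,x_{i+1})$ and $y_{i+1}\in(x_{i+1}+\alpha,x_{i+2})$ you only get $y_{i+1}-y_i>\alpha$, not $\ge m=\mesh(p)$. Your sentence ``each root of $T(p)$ is pinned inside a subinterval of a gap of $p$, so consecutive roots of $T(p)$ are at least $\mesh(p)$ apart'' is a non sequitur: consecutive gaps of $p$ share an endpoint, so roots in adjacent gaps can in principle be as close as $\alpha$. (Your alternative localization $y_i\in[x_i,x_i+\alpha]$ is in fact false for $\lambda>0$, since $r<0$ on $(x_i,x_i+\alpha)$; but even if it were true it would give only $y_{i+1}-y_i\ge m-\alpha$.) You recognize this as ``the main obstacle'' and invoke an unspecified ``convexity estimate''; however $(\log r)''$ has terms of both signs on the relevant intervals, and I do not see a convexity argument that pins down where in $(x_i+\alpha,x_{i+1})$ the crossing occurs with enough precision. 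A direct attempt to show $r(y_i+m)\ge r(y_i)$ leads to a product of factors some of which are $>1$ and some $<1$, with no obvious cancellation.

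The paper circumvents this entirely by changing the target: instead of comparing consecutive roots of $T(p)$, it proves that $T$ \emph{preserves proper position} on $\HP_{\ge\alpha}$, i.e.\ $p\ll q$ implies $T(p)\ll T(q)$. Since $p\in\HP_{\ge m}$ is equivalent to $p(x)\ll p(x-m)$, applying the operator gives $T(p)(x)\ll T(p)(x-m)$, which is exactly $\mesh(T(p))\ge m$. The preservation of $\ll$ is reduced, via a chain-of-polynomials argument, to the single identity $T(xp)=(x-\alpha)T(p)+\alpha p$ together with the convex-cone properties of $\{q:q\ll p\}$ from Lemma~\ref{cone}. This is the missing idea in your sketch; without it (or a genuine replacement), the proposal does not establish the theorem.
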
 

This statement settles Conjecture 2.19 from a recent preprint \cite{CCh}. Our next result is a natural finite difference analog of the Hermite-Poulain theorem. 

\begin{theorem}Ê\label{th:HerPou} A linear finite difference operator $T$ with constant coefficients of the form
\begin{equation}\label{eq:constcoeff}
T(p(x))=a_0p(x)+a_1p(x-1)+\cdots+a_kp(x-k)
\end{equation}  
is a discrete hyperbolicity preserver if and only if all zeros of its symbol polynomial $Q(t)=a_0+a_1t+\cdots+a_kt^k$ are real and non-negative.   
\end{theorem}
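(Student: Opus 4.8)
The plan is to prove the two implications separately, modeling the argument on the classical Hermite–Poulain proof but working with the shift operator $E^{-1}\colon p(x)\mapsto p(x-1)$ in place of $d/dx$. The key structural observation is that $T$ as in \eqref{eq:constcoeff} factors as $T = a_k \prod_{j=1}^{k}\bigl(E^{-1} - r_j\bigr)$ whenever $Q(t) = a_k\prod_{j}(t - r_j)$, where $r_j$ are the roots of the symbol polynomial; here $I$ denotes the identity and $E^{-1}$ the backward shift, so that $E^{-1} - r_j$ sends $p(x)$ to $p(x-1) - r_j\,p(x)$. Since the set of discrete hyperbolicity preservers is a semigroup under composition, it suffices for the ``if'' direction to show that each elementary factor $p(x)\mapsto p(x-1) - r_j p(x)$ with $r_j \ge 0$ preserves $\HP_{\ge 1}$, together with the trivial scaling by $a_k$. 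But $p(x-1) - r_j p(x) = -r_j\bigl(p(x) - r_j^{-1}p(x-1)\bigr)$ when $r_j > 0$, which up to sign and the constant $r_j$ is exactly the operator $T$ of Theorem~\ref{th:FDRiesz} with $\alpha = 1$ and $\lambda = r_j^{-1}$; and when $r_j = 0$ the factor is just the shift $p(x)\mapsto p(x-1)$, which obviously preserves mesh. So the ``if'' direction reduces almost entirely to Theorem~\ref{th:FDRiesz}, the only subtlety being that Theorem~\ref{th:FDRiesz} as stated concerns $\HP_{\ge\alpha}$ and $\HP_{\ge\alpha}^+$, while here I need preservation of $\HP_{\ge 1}$ for \emph{all} real $\lambda = r_j^{-1} > 0$, not merely $\lambda \ge 1$; I expect the mesh half of Theorem~\ref{th:FDRiesz} (which holds for all positive $\lambda$) to be exactly what is needed, so I would quote that part.

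For the ``only if'' direction I would argue by contrapositive: assume $Q(t)$ has a root that is either non-real or negative, and exhibit a polynomial $p\in\HP_{\ge 1}$ with $T(p)\notin\HP_{\ge 1}$. The natural test functions are the ``discrete monomials'' (falling factorials) $x^{\underline{n}} = x(x-1)\cdots(x-n+1)$, whose roots are $0,1,\dots,n-1$ with mesh exactly $1$, and more generally their shifts $(x+c)^{\underline{n}}$. The point is that $E^{-1}$ acts on the falling-factorial basis in a controlled way, and $T$ applied to $(x+c)^{\underline n}$ for large $n$ should, after suitable normalization and a limiting argument, reproduce the behavior of the symbol polynomial: roughly, if $p_n$ is a sequence in $\HP_{\ge 1}$ whose roots become dense (appropriately rescaled), then $T(p_n)$ relates to $Q$ evaluated along the roots, so a bad root of $Q$ forces either a non-real root or a pair of roots at distance less than $1$ in $T(p_n)$ for $n$ large. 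Alternatively, and perhaps more cleanly, one can pass to a continuous limit: under the scaling $x\mapsto x/\epsilon$ and $\epsilon\to 0$, the operator $\sum a_j E^{-j}$ degenerates (after renormalization) to the differential operator with the same symbol, and $\HP_{\ge 1}$ rescales to $\HP_{\ge 0} = \HP$, so a discrete hyperbolicity preserver must limit to a classical hyperbolicity preserver, whence by the classical Hermite–Poulain theorem $Q$ must be hyperbolic; a separate, more elementary argument then rules out negative roots (e.g.\ testing on $(x-1) = x^{\underline 1}$ scaled, or on a product of two nearby linear factors, to see that $a_0 p(x) + a_1 p(x-1)$ with $a_0/a_1 < 0$ can push roots together).

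The main obstacle I anticipate is the ``only if'' direction, specifically making the degeneration/limiting argument rigorous: one must ensure that the limit of real-rooted polynomials with mesh $\ge 1$ is controlled (roots can escape to infinity or collide in the limit), that the renormalization of $T$ converges to the right differential operator with non-vanishing limit, and that non-real roots of $Q$ genuinely obstruct rather than being washed out. A safer route may be to avoid limits entirely and instead compute $T$ directly on a two-parameter family such as $(x - a)^{\underline m}(x - b)^{\underline m}$ or on $x^{\underline n}$ itself, using the explicit three-term-type recursion for $E^{-1}$ on falling factorials, and track the discriminant or the smallest gap between consecutive roots of the image as a function of the roots of $Q$; this is more computational but sidesteps the analytic delicacy. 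I would present the semigroup-plus-Theorem~\ref{th:FDRiesz} argument for ``if'' in full and then choose whichever of these two strategies for ``only if'' yields the cleanest rigorous proof, flagging the falling-factorial computation as the likely workhorse.
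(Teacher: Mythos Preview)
Your ``if'' direction is correct and is exactly what the paper does: factor $T$ through the symbol polynomial and apply Theorem~\ref{th:FDRiesz} (its first conclusion, for all $\lambda>0$) to each linear factor.

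For ``only if'' you are aiming at the right test family---the paper also uses the Pochhammer polynomials $(x)_i$---but you have not found the mechanism, and your alternative scaling route does not work as stated. Conjugating $\sum_j a_j E^{-j}$ by $x\mapsto x/\epsilon$ gives $p\mapsto\sum_j a_j p(x-\epsilon j)$, which tends to $Q(1)\,p$, \emph{not} to the differential operator with symbol $Q$; the shift $E^{-1}$ is not a rescaled $d/dx$ (that would be $\Delta=I-E^{-1}$), so this degeneration cannot recover the classical Hermite--Poulain statement for $Q$. The paper's argument is sharper and stays entirely in the discrete world: for $i\ge k$ one has the factorization
\[
T((x)_i)=(x-k)(x-k-1)\cdots(x-i+1)\,R_i(x),
\]
with $\deg R_i=k$, and one computes the limit
\[
\lim_{i\to\infty}\frac{R_i(ix)}{i^{k}}=x^{k}\,Q_T\!\left(\frac{x-1}{x}\right).
\]
This is the missing idea. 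It immediately shows $Q_T$ is hyperbolic (as a limit of hyperbolic $R_i$'s), and the M\"obius map $y=(x-1)/x$ sends a negative root $y$ of $Q_T$ to some $x\in(0,1)$; by Hurwitz, $R_i(ix)$ then has a zero in a fixed subinterval $(a,b)\subset(0,1)$ for all large $i$, so $R_i$ itself has a zero in $(ia,ib)\subset(k,i-1)$. But the cofactor $(x-k)\cdots(x-i+1)$ already has zeros at every integer in that range, so $T((x)_i)$ has mesh $<1$. Your proposal to ``track the discriminant or the smallest gap'' on $(x)_n$ would eventually lead here, but the factorization and the explicit limit are the crucial steps you are missing.
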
Ê 
As we mentioned above, a famous class of hyperbolicity preservers   is the class of  multiplier sequences introduced and studied by G.~P\'olya and I.~Schur in \cite{PolyaSchur}.  Let us recall this  notion   and  introduce its finite difference analog.

\begin{definition}  Given a sequence $\A=\{\al_i\}_{i=0}^\infty$ of real or complex numbers, we denote by $T_\A$ the linear operator  
$$T_\A(x^i)=\al_ix^i$$
acting diagonally with respect to the monomial basis of $\bC[x]$. 
We refer to $T_\A$ as the {\bf diagonal operator} corresponding to the sequence $\A$. 
\end{definition}

Notice that any diagonal operator $T$ as above can be also written as a formal linear differential operator of (in general) infinite order 
$$T=\sum_{i=0}^\infty a_i x^i \frac{d^i}{dx^i}.$$ 
The relation between the sequences $\A=\{\al_i\}_{i=0}^\infty$ and $A=\{a_i\}_{i=0}^\infty$ representing the same diagonal operator $T$ is of triangular form and given by:  
$$\al_i=a_0+ia_1+i(i-1)a_2+\dots+i!a_i,\; i=0,1,2,\dots $$

\begin{definition}
  We call a sequence $\A=\{\al_i\}_{i=0}^\infty$ of real numbers  a {\bf multiplier sequence of the 1st kind},  if its diagonal operator $T_\A$   preserves $\HP$; i.e., sends an arbitrary hyperbolic polynomial to a hyperbolic polynomial. The above sequence $\A$ is called a  {\bf multiplier sequence of the 2nd kind},  if the above $T_\A$ sends an arbitrary hyperbolic polynomial whose roots are all of the same sign to a hyperbolic polynomial. 
  \end{definition}Ê
The main results of  \cite{PolyaSchur} are explicit criteria  describing when a given sequence $\A=\{\al_i\}_{i=0}^\infty$ represents  a multiplier sequence of the 1st and the 2nd kind.  

Let us now describe a finite difference version of multiplier sequences.  Natural analogs of monomials in the  finite difference setting are the  Pochhammer polynomials  $\{(x)_i\}_{i=0}^\infty$  defined by
 \begin{equation}\label{eq:Poch}
 (x)_0=1,\quad (x)_i=x(x-1)\cdots(x-i+1),\; i\ge 1.
 \end{equation}

\begin{definition}
A finite difference operator $T$ as in \eqref{eq:basic} is called {\bf diagonal} if it acts diagonally with respect to the Pochhammer basis $\{(x)_i\}_{i=0}^\infty$.  
\end{definition}

Analogously to the above case of the usual diagonal operators we can associate to any sequence  $\A=\{\al_i\}_{i=0}^\infty$ of real numbers the corresponding 
diagonal finite difference operator $T_\A$ (in general, of infinite order) by assigning 
$$T_\A((x)_i)=\al_i(x)_i,\; i=0, 1, 2, \ldots$$ 
Observe that a finite difference  analog  $x\Delta$ of the Euler operator $x\frac{d}{dx}$ given by 
$$x\Delta=x(p(x)-p(x-1))$$
 acts diagonally in this basis, namely, $x\Delta((x)_i)=i(x)_i.$ Moreover any diagonal finite difference operator $T$ (of finite or infinite order) can be represented as a formal series 
 $$T=\sum_{i=0}^\infty a_i (x)_i \Delta^i.$$

\begin{definition}
We say that  a diagonal finite difference operator $T$ is  a  {\bf discrete multiplier sequence} if it preserves $\HP^+_{\ge 1}$.
\end{definition}

Our next result is as follows. 

\begin{theorem}\label{th:FDM} An operator $U$ given by $$U(p(x))=\al p(x)+\be x\Delta(p(x))=\al p(x)+\be x(p(x)-p(x-1)),$$ 
 is a discrete multiplier sequence if $\al$ and $\be$ are real numbers of the same sign.  
\end{theorem}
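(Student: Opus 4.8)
The plan is to show that $U(p) = \alpha p(x) + \beta x(p(x)-p(x-1))$ sends $\HP^+_{\ge 1}$ to itself when $\alpha,\beta$ have the same sign. By scaling we may assume $\alpha = \beta$ (multiply the operator by a positive constant, which does not affect hyperbolicity or mesh), and by replacing $p(x)$ with $p(-x)$ if necessary — actually, since we work with $\HP^+_{\ge 1}$ the sign choice matters, so more carefully: if $\alpha,\beta > 0$ we handle that case directly, and the case $\alpha,\beta<0$ follows by multiplying $U$ by $-1$. So it suffices to treat $U(p)= \alpha p(x) + \beta x\,\Delta(p(x))$ with $\alpha,\beta > 0$. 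Writing this in the Pochhammer basis, if $p = \sum_i c_i (x)_i \in \HP^+_{\ge 1}$ then $U(p) = \sum_i (\alpha + \beta i) c_i (x)_i$, i.e. $U = T_{\mathcal A}$ with $\mathcal A = \{\alpha + \beta i\}_{i=0}^\infty$.

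The key idea is to realize $U$ as a limit of iterated first-order operators of the type handled in Theorem~\ref{th:FDRiesz}, exploiting the semigroup property of discrete hyperbolicity preservers. Specifically, I would look at the operator $V_t(p)(x) = p(x) - t\,\Delta(p(x)) = (1-t)p(x) + t p(x-1)$ — but this is not quite of the form $p(x) - \lambda p(x-\alpha)$ with the needed sign, so instead I would use the substitution trick: conjugate by the dilation $p(x)\mapsto p(cx)$ or, better, use the classical fact that $x\Delta$ is the generator of a one-parameter semigroup whose exponential acts on $(x)_i$ by $e^{s i}$. Concretely, since $e^{s\, x\Delta}((x)_i) = e^{si}(x)_i$, the operator $\exp(s\,x\Delta)$ is a geometric discrete multiplier sequence, and one checks $\bigl(I + \tfrac{s}{n} x\Delta\bigr)^n \to e^{s x\Delta}$ as $n\to\infty$ coefficientwise on any fixed polynomial. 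So if I can show $I + \epsilon\, x\Delta$ preserves $\HP^+_{\ge 1}$ for all small $\epsilon>0$, then by the semigroup property all powers do, hence the limit $e^{s x\Delta}$ does (using that $\HP^+_{\ge 1}$ is closed under coefficientwise limits — this requires a Hurwitz-type argument, that a limit of polynomials in $\HP^+_{\ge 1}$ of bounded degree stays in $\HP^+_{\ge 1}$, which is standard since the mesh and positivity conditions are closed). Finally, $U = T_{\{\alpha+\beta i\}} = \alpha\, e^{(\beta/\alpha) x\Delta}$ up to the positive scalar $\alpha$, so this would finish the proof.

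The crux is therefore the base case: \emph{$I + \epsilon\, x\Delta$ preserves $\HP^+_{\ge 1}$ for $\epsilon>0$ sufficiently small.} Writing $(I+\epsilon x\Delta)(p)(x) = (1+\epsilon x)p(x) - \epsilon x\, p(x-1)$, I would argue directly about sign changes: let $p$ have roots $0 \le r_1 \le r_2 - 1 \le \cdots$ (mesh $\ge 1$, all nonnegative). Evaluate $q(x) := (I+\epsilon x\Delta)(p)(x)$ at the integers or at the roots of $p$ and track sign alternations to locate $\deg p$ real roots with gaps $\ge 1$; the nonnegativity should come for free since $q(x)$ and $x p(x)$ agree in sign for $x$ near $0$ and the constructed roots lie to the right of $r_1 \ge 0$. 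An alternative, possibly cleaner, route for the base case is to note that $(I+\epsilon x\Delta) = T_{\{1+\epsilon i\}}$ and use the factorization $1+\epsilon i = \prod$-type reasoning is not available, so instead interpolate: $T_{\{1+\epsilon i\}}(p)$ is, up to normalization, obtainable from $p(x) - \lambda p(x - \alpha)$ of Theorem~\ref{th:FDRiesz} by a suitable affine change of the independent variable that converts the shift $x\mapsto x-\alpha$ acting on the monomial side into the $\Delta$ acting on the Pochhammer side. I expect the sign-counting base case to be the main obstacle, since everything after it is soft (semigroup closure plus a Hurwitz limit argument); the delicate point there is ensuring the mesh of the image is genuinely $\ge 1$ and not merely positive, which is exactly where the hypothesis that $p\in\HP^+_{\ge 1}$ (rather than just $\HP^+$) and the multiplicative factor $x$ in $x\Delta$ must both be used.
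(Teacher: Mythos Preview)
There is a genuine gap. Your central identity ``$U = T_{\{\alpha+\beta i\}} = \alpha\, e^{(\beta/\alpha)\, x\Delta}$'' is false: the operator $e^{s\,x\Delta}$ has eigenvalue $e^{si}$ on $(x)_i$, so $\alpha\, e^{(\beta/\alpha)\,x\Delta}$ corresponds to the sequence $\{\alpha e^{(\beta/\alpha)i}\}_{i\ge 0}$, not to $\{\alpha+\beta i\}_{i\ge 0}$. Hence even if your limiting argument $(I+\tfrac{s}{n}x\Delta)^n\to e^{s\,x\Delta}$ goes through, it proves that the \emph{geometric} sequence $\{e^{si}\}$ is a discrete multiplier, not that the \emph{linear} sequence $\{\alpha+\beta i\}$ is. Consequently the whole semigroup/limit scaffolding is aimed at the wrong target. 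Note also that your ``base case'' --- that $I+\epsilon\,x\Delta$ preserves $\HP^+_{\ge 1}$ --- is, up to a harmless positive scalar, precisely the statement of the theorem for the particular ratio $\beta/\alpha=\epsilon$; proving it only for small $\epsilon$ and then iterating produces the multipliers $(1+\epsilon i)^n$, which are again not $\alpha+\beta i$. So the reduction is circular: you need the base case for \emph{every} $\epsilon>0$, and that is the theorem itself.

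The paper's argument is short, direct, and works for all $\lambda\ge 0$ at once. Writing $W_\lambda(p)=p(x)+\lambda x\Delta p(x)$, one first observes (from Theorem~\ref{th:FDRiesz}) that $\Delta p \ll p(x)$ and $\Delta p \ll p(x-1)$; since $\Delta p$ has degree $\deg p-1$ and only nonnegative zeros, multiplying by $\lambda x$ preserves these relations, giving $\lambda x\Delta p \ll p(x)$ and $\lambda x\Delta p \ll p(x-1)$. Combining with $p\ll p$ and $p\ll p(x-1)$ via the convex-cone Lemma~\ref{cone}(a) yields $W_\lambda(p)\ll p(x)$ and $W_\lambda(p)\ll p(x-1)$, which together force $W_\lambda(p)\in\HP_{\ge 1}$. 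Nonnegativity of the zeros follows from a sign comparison of $p$ and $x\Delta p$ on the negative axis. Your sketched sign-change argument for the base case could perhaps be pushed through, but the interlacing/cone route handles the mesh bound cleanly without any smallness assumption on $\epsilon$.
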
 
\noindent 
{\bf Remark 1.}  Observe that, in general, the above operator $U$   is {\bf not} Êmesh-increasing. Therefore, Theorem~\ref{th:FDM}Ê is not a complete analog of Theorem~\ref{th:FDRiesz}.  A simple example of this phenomenon is 
$U(p(x))=p(x)+(3/{4}) x(p(x)-p(x-1))$; i.e., $\al=1,\quad \be={3}/{4}$. When $p(x)=(x-1)(x-4)(x-7)$, then $U(p(x))$ has three positive roots which are approximately equal to $(0.433167, 3.12467, 6.36524)$ and its mesh is smaller than $3$. 

\begin{proposition}\label{d-c}
If $\{\al_i\}_{i=0}^\infty$ is a discrete multiplier sequence, then it is a multiplier sequence in the classical sense. 
\end{proposition}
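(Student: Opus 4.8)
The plan is to exhibit the classical diagonal operator as a limit of conjugates of the discrete one by dilations, and then to invoke the closure of hyperbolicity under coefficientwise limits. Throughout, write $T$ for the given discrete diagonal operator, $T((x)_i)=\al_i(x)_i$, and $S$ for the classical diagonal operator $S(x^i)=\al_i x^i$; the goal is to show that $S$ sends every hyperbolic polynomial all of whose roots have the same sign to a hyperbolic polynomial, i.e.\ that $\A$ is a classical multiplier sequence (of the second kind, which is the exact analogue of preserving $\HP^{+}_{\ge 1}$). Since the substitution $\tau(p)(x)=p(-x)$ commutes with $S$ exactly (both $\tau$ and $S$ are diagonal in the monomial basis), preserves $\HP$, and carries polynomials with non-positive roots to polynomials with non-negative roots, it suffices to prove that $S$ preserves the hyperbolic polynomials whose roots are all non-negative.

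Introduce the dilation $\sigma_N(p)(x)=p(Nx)$, with inverse $\sigma_N^{-1}(p)(x)=p(x/N)$. For a polynomial $p$ with distinct non-negative roots $r_1,\dots,r_n$, the polynomial $\sigma_N^{-1}(p)$ has roots $Nr_1,\dots,Nr_n$, which are non-negative and, once $N\ge 1/\mesh(p)$, satisfy $\mesh(\sigma_N^{-1}(p))=N\,\mesh(p)\ge 1$; hence $\sigma_N^{-1}(p)\in\HP^{+}_{\ge 1}$. As $T$ preserves $\HP^{+}_{\ge 1}$ and $\sigma_N$ merely rescales roots (so preserves $\HP$), the conjugate $\sigma_N\circ T\circ\sigma_N^{-1}$ maps such a $p$ into $\HP$ for all large $N$. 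On the other hand, expanding each monomial in the Pochhammer basis as $x^i=\sum_{k=0}^i c_{i,k}(x)_k$ with $c_{i,i}=1$, and using $(Nx)_k=N^k x^k+O(N^{k-1})$, one finds
\[
(\sigma_N\circ T\circ\sigma_N^{-1})(x^i)=N^{-i}\sum_{k=0}^{i}c_{i,k}\,\al_k\,(Nx)_k\xrightarrow[N\to\infty]{}\al_i x^i=S(x^i),
\]
the convergence being coefficientwise. By linearity, $\sigma_N\circ T\circ\sigma_N^{-1}\to S$ on every space of polynomials of bounded degree.

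Combining the two observations: for $p\in\HP$ with distinct non-negative roots we have $(\sigma_N\circ T\circ\sigma_N^{-1})(p)\in\HP$ for all large $N$, and this sequence converges coefficientwise to $S(p)$; by Hurwitz's theorem (the closure of hyperbolicity under coefficientwise limits) we conclude $S(p)\in\HP$. A hyperbolic polynomial with repeated non-negative roots is itself a coefficientwise limit of hyperbolic polynomials with distinct non-negative roots, so a second application of the same closure principle gives $S(p)\in\HP$ in full generality. Together with the reflection $\tau$ from the first paragraph, this proves that $S$ preserves every hyperbolic polynomial with same-sign roots, which is precisely the statement that $\A$ is a multiplier sequence in the classical sense.

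The one genuinely delicate point is the commutation of the discrete operator with the dilation limit: one must verify that the subleading terms $O(N^{k-1})$ of $(Nx)_k$, after division by $N^i$ and summation against the fixed, $N$-independent coefficients $c_{i,k}\al_k$, contribute only $O(N^{-1})$, so that in the limit only the diagonal term $k=i$ survives and produces $\al_i x^i$. Everything else---preservation of $\HP$ under the dilations $\sigma_N^{\pm 1}$, the linear growth $\mesh(\sigma_N^{-1}(p))=N\,\mesh(p)$ that eventually pushes the mesh above $1$, and the limit-closure of $\HP$---is routine. A secondary point worth flagging is that this argument yields only same-sign preservation, i.e.\ a second-kind sequence; upgrading to a first-kind sequence would require controlling $S$ on mixed-sign inputs, which the hypothesis of preserving $\HP^{+}_{\ge 1}$ does not provide.
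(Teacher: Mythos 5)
Your proof is correct, and it reaches the conclusion by a route that is structurally parallel to the paper's but mechanically different. Both arguments rest on the same two-step idea: dilate so that the mesh condition $\geq 1$ is met, apply the discrete operator, undo the dilation, and let the scale degenerate so that Pochhammer polynomials collapse to monomials. The difference lies in how the test polynomial is certified to lie in $\HP^{+}_{\geq 1}$. The paper first applies Brenti's operator $x^i \mapsto (x)_i$ to $p(\rho x)$, invoking Lemma~\ref{brenti} to get $\sum_i \ga_i \rho^i (x)_i \in \HP^{+}_{\geq 1}$; after applying $T_\A$ and substituting $x \mapsto x/\rho$ the limit computation is then completely transparent, since $\rho^i (x/\rho)_i = x(x-\rho)\cdots(x-(i-1)\rho) \to x^i$ term by term. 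You instead feed the dilate $p(x/N)$ directly to $T_\A$ (its membership in $\HP^{+}_{\geq 1}$ is immediate once $N\,\mesh(p)\geq 1$, which is why you need the preliminary reduction to distinct roots) and realize $S$ as the limit of the conjugates $\sigma_N \circ T_\A \circ \sigma_N^{-1}$. This buys you independence from Brenti's lemma --- a nontrivial inductive result in the paper --- at the price of a slightly messier limit: you must expand $x^i$ in the Pochhammer basis and check that the off-diagonal and subleading contributions are $O(N^{-1})$, which you do correctly since the change-of-basis coefficients are fixed and $(Nx)_k - N^k x^k$ has all coefficients of order at most $N^{k-1}$. Your closing remark that the argument only yields second-kind (same-sign) preservation applies equally to the paper's proof; in both cases the upgrade to a first-kind multiplier sequence is implicit via the P\'olya--Schur characterization together with the fact (Corollary~\ref{signs}) that all nonzero entries share a sign, so this is not a defect relative to the published argument.
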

\noindent 
{\bf Remark 2.} 
Notice that the converse to Proposition~\ref{d-c} fails  since the ordinary multiplier sequence  $\{\rho^i\}_{i=0}^\infty$,  where $0 < \rho <1$, is not a a discrete multiplier sequence. 

\medskip
Denote by $\mathcal{L\!\!-\!\!P_+}$  the  positive subclass in the  Laguerre-P\'olya class; i.e., real entire functions which are the uniform limits, on compact subsets of the complex plane, of polynomials with only real positive zeros.  

\begin{theorem}\label{phi-k}
If  $\phi(x) \in \mathcal{L\!\!-\!\!P}_+$ then the sequence $\{\phi(i)\}_{i=0}^\infty$ is a discrete multiplier sequence. 
\end{theorem}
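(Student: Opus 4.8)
\textbf{Proof proposal for Theorem~\ref{phi-k}.}

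The plan is to reduce the infinite-order statement to a finite-order one by a standard approximation argument, and then use Theorem~\ref{th:FDM} as the building block. First I would record the structural description of $\mathcal{L\!\!-\!\!P_+}$: a function $\phi\in\mathcal{L\!\!-\!\!P_+}$ has a Hadamard factorization of the form $\phi(x)=Cx^m e^{-\sigma x}\prod_{j\ge 1}(1-x/x_j)$ with $C\in\bR$, $m\in\mathbb{Z}_{\ge 0}$, $\sigma\le 0$, $x_j>0$ and $\sum 1/x_j<\infty$ (the sign conventions chosen so that $\phi$ is a locally uniform limit of polynomials with only nonnegative zeros). Equivalently $\phi$ is a locally uniform limit of polynomials of the form $c\prod_{j=1}^N(1+\beta_j x)$ with $c\in\bR$ and all $\beta_j\ge 0$, together with the factor $x^m$. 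So it suffices to prove the theorem for each elementary factor $x$, $(1+\beta x)$ with $\beta\ge 0$, and for the constant, and then pass to the limit.

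The core step is therefore: for $\beta\ge 0$, the sequence $\{1+\beta i\}_{i=0}^\infty$ is a discrete multiplier sequence. But the diagonal operator associated with $\{1+\beta i\}$ is exactly $U(p)=p(x)+\beta\, x\Delta(p(x))$, since $x\Delta$ acts on the Pochhammer basis by $x\Delta((x)_i)=i(x)_i$. With $\al=1,\ \be=\beta\ge 0$ of the same sign, Theorem~\ref{th:FDM} gives precisely that $U$ preserves $\HP^+_{\ge 1}$, i.e.\ $\{1+\beta i\}$ is a discrete multiplier sequence. The sequence corresponding to the factor $x^m$ is $\{i(i-1)\cdots(i-m+1)\}_{i=0}^\infty$, whose diagonal operator is $(x\Delta)^m$; this is a composition of the $\beta=1$ (up to scaling) case of $U$ with itself, hence a discrete multiplier sequence because discrete hyperbolicity preservers — and in particular discrete multiplier sequences, being diagonal ones that preserve $\HP^+_{\ge 1}$ — form a semigroup under composition. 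Constants scale every polynomial and trivially preserve $\HP^+_{\ge 1}$. Multiplying the sequences termwise corresponds to composing the diagonal operators, so $\{\phi_N(i)\}$ is a discrete multiplier sequence for every partial product $\phi_N(x)=c\,x^m\prod_{j=1}^N(1+\beta_j x)$.

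The remaining, and I expect the only genuinely delicate, step is the limiting argument: if $\phi_N\to\phi$ locally uniformly with each $\{\phi_N(i)\}$ a discrete multiplier sequence, then $\{\phi(i)\}$ is one too. Fix $p\in\HP^+_{\ge 1}$ of degree $n$, write $p=\sum_{i=0}^n c_i (x)_i$ in the Pochhammer basis, and set $q_N=T_{\{\phi_N(i)\}}(p)=\sum_{i=0}^n \phi_N(i)c_i(x)_i$ and $q=\sum_{i=0}^n \phi(i)c_i(x)_i$. Since only finitely many coefficients are involved and $\phi_N(i)\to\phi(i)$ for each $i\le n$, we get $q_N\to q$ coefficientwise, hence locally uniformly on $\bC$. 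Each $q_N\in\HP^+_{\ge 1}$ (discarding the finitely many $N$ for which $q_N$ degenerates is harmless, or one treats them directly). Now invoke Hurwitz's theorem: a locally uniform limit of polynomials of bounded degree with only real zeros has only real zeros, and one must check that the limit keeps the mesh bound $\ge 1$ and the nonnegativity of roots. Nonnegativity and real-rootedness pass to the limit by Hurwitz directly; the mesh-$\ge 1$ condition requires a small argument, since Hurwitz allows zeros to collide in the limit — but a collision only makes the polynomial have a multiple root, which by our convention has mesh $0<1$, so one must rule it out. The clean way is: the mesh-$\ge 1$ condition on a degree-$n$ hyperbolic polynomial is a closed condition on the coefficient vector (it is equivalent to the discriminant-type inequalities $p(x)$ having $n$ real roots with consecutive gaps $\ge 1$, a closed semialgebraic set once we also allow the degenerate boundary where the polynomial drops degree or acquires mesh exactly the infimum), so the limit of such polynomials, if it has the same degree, again satisfies it; and if the degree drops the limit is still in $\HP^+_{\ge 1}$ since lower-degree real-rooted polynomials with mesh $\ge 1$ and nonnegative roots are included. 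Assembling these pieces yields $q\in\HP^+_{\ge 1}$, completing the proof. The one subtlety worth spelling out carefully in the final write-up is exactly this closedness of $\HP^+_{\ge 1}$ under locally uniform limits within a fixed degree bound; everything else is bookkeeping on top of Theorem~\ref{th:FDM} and the semigroup property.
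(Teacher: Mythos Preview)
Your approach is essentially identical to the paper's: invoke Theorem~\ref{th:FDM} (equivalently Proposition~\ref{claim2}) to see that $\{1+\lambda i\}_{i\ge 0}$ is a discrete multiplier sequence for each $\lambda\ge 0$, use the semigroup property to handle finite products, and then pass to locally uniform limits via Hurwitz---the paper compresses all of this into three sentences, while you spell out the limiting step in detail. One small slip: the sequence attached to the factor $x^m$ is $\{i^m\}_{i\ge 0}$, not $\{(i)_m\}_{i\ge 0}$; however, your identification of the corresponding diagonal operator as $(x\Delta)^m$ is correct (since $x\Delta((x)_i)=i\,(x)_i$), so the argument is unaffected.
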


A sequence $\{\alpha_i\}_{i=0}^\infty$ is said to be \emph{trivial} if $\alpha_i \neq 0$ for at most two indices $i$. Trivial discrete multiplier sequences are simple to describe. 

\begin{proposition}\label{trivial}
A trivial sequence $\{\alpha_i\}_{i=0}^\infty$  is a discrete multiplier sequence if and only if there is an integer $m \geq 0$ such that $\alpha_m\alpha_{m+1}\geq 0$ and $\alpha_i = 0$ unless $i \in \{m,m+1\}$. 
\end{proposition}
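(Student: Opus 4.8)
The plan is to verify the two directions separately, using the basic dictionary between the Pochhammer action and ordinary finite differences. Recall that a diagonal finite difference operator with eigenvalues $\{\alpha_i\}$ sends $(x)_i$ to $\alpha_i (x)_i$, and that $(x)_i \in \HP^+_{\ge 1}$ for every $i$ (its roots are $0,1,\dots,i-1$, consecutive integers). First I would pin down the normalization: if only $\alpha_m \ne 0$, then $T_\A$ has a one-dimensional image spanned by $(x)_m$, and since a single Pochhammer polynomial lies in $\HP^+_{\ge 1}$ (and scaling by any real constant preserves the root set, hence the mesh), such a one-term sequence is always a discrete multiplier sequence; this is the degenerate case of the claimed description with, say, $\alpha_{m+1}=0$.

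For the ``if'' direction with two consecutive nonzero entries $\alpha_m,\alpha_{m+1}$ of the same sign: after factoring out the common sign and a positive scalar, I would reduce to showing that $p \mapsto \alpha_m \langle p,(x)_m\rangle (x)_m + \alpha_{m+1}\langle p,(x)_{m+1}\rangle (x)_{m+1}$ maps $\HP^+_{\ge 1}$ into itself, where $\langle\cdot,\cdot\rangle$ extracts the Pochhammer coefficient. The cleanest route is to observe that this two-term diagonal operator factors through the operator $U$ of Theorem \ref{th:FDM} (or an appropriate shift thereof): on the span of $(x)_m$ and $(x)_{m+1}$ the operator $\gamma\, \mathrm{Id} + \delta\, x\Delta$ acts with eigenvalues $\gamma + \delta m$ and $\gamma + \delta(m+1)$, and any pair of same-sign reals in positions $m,m+1$ arises this way (choose $\delta = \alpha_{m+1}-\alpha_m$, $\gamma = \alpha_m - \delta m$, noting $\gamma$ and $\gamma+\delta$ need not have the same sign in general — so instead I would argue directly). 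The direct argument: a polynomial $p\in\HP^+_{\ge1}$ of degree $\le m+1$ has Pochhammer expansion supported on indices $\le m+1$, so $T_\A(p) = \alpha_m c_m (x)_m + \alpha_{m+1}c_{m+1}(x)_{m+1}$ where $c_m,c_{m+1}$ are the two top Pochhammer coefficients of $p$; one shows these two coefficients have the same sign when $p$ has nonnegative roots with mesh $\ge 1$ (indeed for such $p$ the Pochhammer coefficients are all of one sign, a standard positivity fact since $p$ is a nonnegative combination of products $(x)_j$ — this is the key lemma to isolate), whence $T_\A(p)$ equals a scalar times $(x)_m\big(\beta + (x-m)\big)$ for some $\beta\ge 0$, which is real-rooted with roots $0,\dots,m-1,m-\beta$ — and since $\beta \le$ the gap forced by mesh $\ge 1$, the mesh stays $\ge 1$; polynomials of higher degree are killed. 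The main obstacle is exactly this positivity/mesh bookkeeping for the two surviving Pochhammer coefficients, i.e.\ proving the new root $m-\beta$ cannot fall strictly between $m-1$ and $m$ in a way that violates the mesh-one condition.

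For the ``only if'' direction I would show that any discrete multiplier sequence which is trivial must have its (at most two) nonzero entries in consecutive positions with the same sign. Apply $T_\A$ to the test polynomials $(x)_i \in \HP^+_{\ge 1}$: each individually only shows $\alpha_i(x)_i \in \HP^+_{\ge1}$, which is automatic, so I instead apply $T_\A$ to polynomials mixing two Pochhammer symbols. Concretely, for $i<j$ with $\alpha_i,\alpha_j$ the two nonzero entries, evaluate $T_\A$ on members of the pencil $(x)_i\big((x-i)(x-i-1)\cdots\big)$ adjusted so that the Pochhammer expansion has prescribed coefficients at positions $i$ and $j$; real-rootedness of $\alpha_i c_i (x)_i + \alpha_j c_j (x)_j$ for all admissible $(c_i,c_j)$ forces, first, $j=i+1$ (if $j\ge i+2$ one can choose signs making $(x)_i\,\pm\,t\,(x)_j$ non-real-rooted for suitable $t>0$, using that $(x)_j/(x)_i$ is a polynomial of degree $\ge 2$ in $x$ that is not a square), and second $\alpha_i\alpha_{i+1}\ge0$ (opposite signs push the extra root outside $[0,\infty)$ or destroy the mesh, as in Remark 2's mechanism). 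Combining the two directions gives the stated equivalence. I expect the ``only if'' direction's case $j \ge i+2$ to be the secondary obstacle, handled by exhibiting an explicit bad polynomial in the pencil.
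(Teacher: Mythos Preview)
Your ``if'' direction contains a genuine sign error that undermines the argument. You claim that for $p\in\HP^+_{\ge 1}$ the Pochhammer coefficients ``are all of one sign, a standard positivity fact.'' This is false: they \emph{alternate} in sign. For instance $x(x-2)=(x)_2-(x)_1$. The correct statement is the paper's Lemma~\ref{altn}: if $p=\sum a_i(x)_i\in\HP^+_{\ge 1}$ with $a_n>0$, then $(-1)^{n-i}a_i\ge 0$. Once you have the right sign pattern, the ``main obstacle'' you identify evaporates: since $c_m$ and $c_{m+1}$ have opposite signs while $\alpha_m$ and $\alpha_{m+1}$ have the same sign, the products $\alpha_m c_m$ and $\alpha_{m+1}c_{m+1}$ have opposite signs, so
\[
T_\A(p)=(x)_m\bigl[\alpha_{m+1}c_{m+1}(x-m)+\alpha_m c_m\bigr]
\]
has its extra root at $m-\dfrac{\alpha_m c_m}{\alpha_{m+1}c_{m+1}}\ge m$, and the mesh-one condition is automatic. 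There is nothing to bound about $\beta$; the worry that the new root might land in $(m-1,m)$ is an artifact of the wrong sign claim.

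For the ``only if'' direction your plan is workable but heavier than necessary. The paper instead invokes two earlier results: Corollary~\ref{signs} (all nonzero entries of a discrete multiplier sequence share a sign, which itself follows from Lemma~\ref{altn}) and Proposition~\ref{d-c} (every discrete multiplier sequence is a classical multiplier sequence). This reduces the question to the well-known classification of trivial classical multiplier sequences, avoiding the construction of explicit bad pencils for the gap case $j\ge i+2$. Your direct approach would need you to control which pairs $(c_i,c_j)$ actually arise from some $p\in\HP^+_{\ge 1}$, which is not immediate.
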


We conjecture the following tantalizing characterization of non-trivial discrete multiplier sequences, which would be a discrete parallel to the classical result of P\'olya and Schur \cite{PolyaSchur}.  

\begin{conjecture}\label{nice}
Let  $\{\alpha_i\}_{i=0}^\infty$ be a non-trivial sequence such that $\alpha_i>0$ for some $i$. Then it is a discrete multiplier sequence if and only if it is a multiplier sequence such that $0 \leq \alpha_1 \leq \alpha_2 \leq \cdots$. 
\end{conjecture}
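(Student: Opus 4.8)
The plan is to prove the two implications separately. By Proposition~\ref{d-c} one half of the ``only if'' part---that a discrete multiplier sequence is a classical multiplier sequence---is already available, so the forward direction reduces to extracting the two extra constraints $\al_i\ge 0$ and $\al_1\le\al_2\le\cdots$, while the ``if'' part is a genuine synthesis problem: to manufacture an arbitrary classical multiplier sequence satisfying $0\le\al_1\le\al_2\le\cdots$ out of the discrete multiplier sequences that we already control.

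For the forward direction I would first pin down the signs. Applying $T_\A$ to the boundary polynomial $p=(x)_{n+1}-c\,(x)_n=(x)_n(x-n-c)\in\HP^{+}_{\ge 1}$, which is admissible for every $c\ge 0$, gives $T_\A(p)=\al_{n+1}(x)_n\,(x-n-c\,\al_n/\al_{n+1})$, and the requirement that its mesh stay $\ge 1$ forces $\al_n\al_{n+1}\ge 0$; since the sequence is non-trivial with a positive entry, this propagates to $\al_i\ge 0$ for all $i$. The ordering $\al_n\le\al_{n+1}$ is subtler and scale sensitive---it is exactly where the rigidity of ``mesh $=1$'' enters---so I would obtain it by a variational argument at the extremal simplex $(x)_d$, whose roots $0,1,\dots,d-1$ realise mesh exactly $1$. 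Linearising $p_\epsilon=(x)_d+\epsilon\sum_i v_i(x)_i$, the admissible perturbations (those keeping $p_\epsilon\in\HP^{+}_{\ge 1}$ for small $\epsilon>0$) are precisely those whose induced root displacements $\rho_j=-\big(\sum_i v_i\,(j-1)_i\big)/p'(j-1)$ are non-negative and non-decreasing in $j$; requiring the image displacements, obtained by replacing each $v_i$ with $\al_i v_i$, to satisfy the same cone condition is a finite linear constraint that, after the sign bookkeeping $p'(j-1)=(-1)^{d-j}(j-1)!\,(d-j)!$, should yield $\al_n\le\al_{n+1}$.

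For the reverse direction I would exploit that discrete multiplier sequences form a semigroup under termwise products, since the composition of two diagonal finite difference operators is again diagonal with eigenvalues the products of the eigenvalues. The generators furnished by Theorem~\ref{th:FDM} are the arithmetic sequences $\{\al+\be i\}$ with $\al,\be\ge 0$, that is, the interpolations $\{\psi(i)\}$ of the linear functions $\psi(x)=\al+\be x$ with a non-positive zero; a finite termwise product of these is $\{\Psi(i)\}$ with $\Psi=\prod(\al_j+\be_j x)$ a polynomial having only non-positive zeros, and a locally uniform limit of such $\Psi$ lands in $\mathcal{L\!\!-\!\!P}_+$, recovering exactly the sequences of Theorem~\ref{phi-k}. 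The strategy is therefore twofold: first, prove a Hurwitz-type closure lemma, namely that the class of discrete multiplier sequences is closed under the pointwise limits that arise here, using that $\HP^{+}_{\ge 1}$ is a closed set and that roots depend continuously on coefficients once leading terms are controlled; and second, represent an arbitrary classical multiplier sequence with $0\le\al_1\le\al_2\le\cdots$ as such a limit of interpolation sequences $\{\psi(i)\}$, $\psi\in\mathcal{L\!\!-\!\!P}_+$. Granting these, Theorem~\ref{phi-k} closes the argument.

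The main obstacle is the representation step. A non-negative classical multiplier sequence corresponds, via P\'olya--Schur \cite{PolyaSchur}, to an entire function $\sum\al_i x^i/i!$ in the Laguerre--P\'olya class with only non-positive zeros; but this is the \emph{exponential generating function}, whereas the interpolation form $\al_i=\psi(i)$ involves a different function $\psi$, and it is classical that not every multiplier sequence is an $\mathcal{L\!\!-\!\!P}$-interpolation. What must be shown is that the monotonicity $0\le\al_1\le\al_2\le\cdots$ forces the sequence into the pointwise closure of the interpolation sequences, equivalently into the closed semigroup generated by the operators of Theorem~\ref{th:FDM}. I expect the natural framework to be total positivity: by the Aissen--Schoenberg--Whitney and Edrei--Thoma theory the non-negative multiplier sequences are exactly those whose Toeplitz matrix $(\al_{j-i})$ is totally positive, and the extra monotonicity ought to translate into a total-positivity statement for an associated matrix that certifies the desired factorisation. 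Making this translation precise, and controlling the limits so that the closure lemma applies, is the crux; the scale-rigidity of the mesh-one normalisation---which is already what excludes $\{\rho^i\}$ with $0<\rho<1$ despite its being a classical multiplier sequence---is precisely the feature that makes this harder than the continuous Hermite--Poulain setting, and is, I suspect, why the statement is offered here as a conjecture rather than a theorem.
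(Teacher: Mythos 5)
You are attempting to prove Conjecture~\ref{nice}, which the paper explicitly leaves open: it supplies no proof, only partial results in one direction (Proposition~\ref{d-c}, Lemma~\ref{altn} with Corollary~\ref{signs}, and Proposition~\ref{alink}), and the authors say themselves that they only ``almost prove'' the forward implication. Your proposal, as you candidly acknowledge, is a program rather than a proof, so the fair comparison is between your partial steps and the paper's. On the forward direction there are two concrete gaps. First, your pairwise test with $p=(x)_n(x-n-c)$ does give $\al_n\al_{n+1}\ge 0$, but the claimed propagation to a global sign fails across zero entries: a pattern such as $1,0,-1,-2,\ldots$ satisfies every consecutive-product inequality, so pairwise tests alone cannot yield Corollary~\ref{signs}. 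The paper closes this by Lemma~\ref{altn}, which tests polynomials all of whose Pochhammer coefficients are active, shows the coefficients of any member of $\HP^{+}_{\ge 1}$ alternate in sign, and iterates the forward difference $\nabla$ to reach every index. Second, your variational argument at the extremal polynomial $(x)_d$ is never executed; first-order cone analysis at a boundary point of $\HP^{+}_{\ge 1}$ is delicate (one must distinguish perturbations admissible for small $\epsilon>0$ from the closure of the tangent cone, and degenerate directions where several mesh constraints are simultaneously active), and you do not exhibit the linear functional that produces $\al_n\le\al_{n+1}$. The paper instead obtains monotonicity via Proposition~\ref{alink}: apply $T$ to the explicit cubics $(x)_m(x-m-a)(x-m-1-a)$, $a\ge 0$, and use the elementary criterion that $Ax(x-1)-2Bx+C\in\HP^{+}_{\ge 1}$ (with $A,B,C\ge 0$) iff $AC\le B^2+AB$, giving $\al_m\le\al_{m+1}$ whenever $\al_{m+2}>0$. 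Note that even this is only a conditional version of what the conjecture asserts, which is precisely why the statement remains a conjecture.

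On the reverse direction your reduction --- termwise-product semigroup generated by Theorem~\ref{th:FDM}, a Hurwitz-type closure lemma, and a representation of every monotone multiplier sequence as a limit of $\mathcal{L\!\!-\!\!P}_+$-interpolations --- correctly isolates the crux, and the semigroup and closure ingredients are sound (the paper uses exactly this mechanism to prove Theorem~\ref{phi-k}). But the representation step is the entire open content, and there is evidence your route through $\mathcal{L\!\!-\!\!P}_+$-interpolations alone is too narrow: the paper's concluding examples, e.g.\ $\{(n)_i\}_{n=0}^\infty$ and more generally $\{p(n)\}_{n=0}^\infty$ with $p(x)=(x)_i q(x)$ and $q$ having all zeros in $(-\infty,i]$, are discrete multiplier sequences whose interpolating functions have zeros in $(0,\infty)$, hence lie outside the class of Theorem~\ref{phi-k}; the paper establishes them by a separate interlacing argument ($\Delta p \ll p(x-1)$, pushing the zeros of $\Delta p$ into $[1,\infty)$). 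So the closed semigroup you propose to generate is not even known to contain all the discrete multiplier sequences already in hand, let alone all classical multiplier sequences with $0\le\al_1\le\al_2\le\cdots$. Your suggestion to route the representation through total positivity (Aissen--Schoenberg--Whitney) is plausible but entirely undeveloped. In short: your forward-direction sketches should be replaced by the paper's Lemma~\ref{altn} and Proposition~\ref{alink}, and your reverse direction, like the paper's, stops exactly at the open problem.
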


We almost prove one direction of Conjecture \ref{nice}, namely we prove  that any discrete multiplier sequence with infinitely many non-zero entries and at least one positive entry is weakly increasing, see Proposition \ref{alink}.  

%
%

\medskip
\noindent
{\it Acknowledments.} The authors are grateful to Professors O.~Katkova and A.~Vishnyakova of Kharkov National University for discussions of the topic. The third author is grateful to the Department of Mathematics, Brunel University for the hospitality in June 2009 when this project was initiated. 

\medskip

\section{Proving a Discrete Hermite-Poulain theorem}Ê
The following lemma  emphasizes the difference between ordinary and discrete hyperbolicity preservers. 
\begin{lemma}\label{lm:1}
A  finite difference operator $T$  of the form \eqref{eq:basic} is hyperbolicity preserving in the classical sense if and only if $q_i(x) \not \equiv 0$ for at most one $i$, and $q_i(x)$ is hyperbolic for such an $i$.
\end{lemma}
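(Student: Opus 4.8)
The ``if'' direction is immediate: if $q_i(x) \not\equiv 0$ for a single index $i$, then $T(p)(x) = q_i(x) p(x-i)$, which is a product of a fixed hyperbolic polynomial with a shift of $p$, and shifting the variable preserves hyperbolicity, so $T$ preserves $\HP$. Thus the content is in the ``only if'' direction, and the plan is to show that if $T$ has two or more nonvanishing coefficient polynomials then one can exhibit a hyperbolic $p$ with $T(p) \notin \HP$.

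The natural strategy is a degree/leading-behavior argument combined with a perturbation. Suppose $q_i \not\equiv 0$ and $q_j \not\equiv 0$ with $i < j$. First I would test $T$ on $p(x) = (x-r)^n$ for a real parameter $r$ and large $n$; then $T(p)(x) = \sum_{\ell} q_\ell(x)(x - \ell - r)^n$. Dividing through by $(x-r)^n$ (away from $x=r$) reduces the question to the location of zeros of $\sum_\ell q_\ell(x)\bigl(1 - \ell/(x-r)\bigr)^n$; for $x$ in a bounded region and $n\to\infty$, the terms with larger $\ell$ are exponentially amplified relative to smaller $\ell$ once $|x - r|$ is small, which forces a dominant balance between just two consecutive active terms and produces a factor behaving like $q_j(x)(-j)^n + q_i(x)(\cdots)$ whose zeros can be pushed into the complex plane. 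More robustly, I would instead use the substitution $p(x) = (x)_n / \text{(appropriate shift)}$, or test on $p_t(x) = \prod_{m=1}^{n}(x - tm)$ with $t$ large so that $p_t \in \HP$, and track how $T(p_t)$ degenerates: as $t \to \infty$ the rescaled polynomial $t^{-n} T(p_t)(tx)$ converges to something like $\bigl(\sum_\ell q_{\text{top},\ell}\bigr)$-weighted combination that need not be real-rooted unless all but one $q_\ell$ vanish.

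The cleanest route is probably this: assume for contradiction $T$ preserves $\HP$ and has $\ge 2$ active coefficients. Apply $T$ to $p(x) = x^n$; then $T(x^n) = \sum_\ell q_\ell(x)(x-\ell)^n$ must be hyperbolic for every $n$. Write $q_\ell(x) = c_\ell x^{d_\ell} + \text{lower}$. Comparing the top-degree asymptotics as $x \to +\infty$ against the behavior near a common real zero (if the $q_\ell$ share one) or examining the Newton-polygon/Puiseux structure of the family in the parameter $1/n$ should reveal a pair of nearly-coalescing roots that split off the real line — contradiction. I would isolate the two active indices $i<j$ of smallest and largest value, factor out the common part, and reduce to the order-one model $q_i(x)p(x-i) + q_j(x)p(x-j)$, i.e.\ essentially $a(x)p(x) + b(x)p(x-m)$ with $m = j-i \ge 1$; here testing on $p(x) = x^n$ and invoking the structure of $a(x)x^n + b(x)(x-m)^n$ (whose complex zeros near $x = 0$ behave like the $n$-th roots related to $-b(0)/a(0)$ scaled by $m/n$, hence spread around a circle) exhibits non-real zeros for large $n$.

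The main obstacle is making the perturbation argument fully rigorous in the case where the polynomials $q_i, q_j$ and the intermediate ones conspire — for instance when several $q_\ell$ are simultaneously nonzero and their leading terms could in principle cancel in the relevant asymptotic regime. Handling this cleanly likely requires choosing the test polynomial adaptively (a high power of a generic linear factor, or a Pochhammer-type product) and extracting a genuine two-term dominant balance via a Newton-polygon analysis in the small parameter, so that exactly one coefficient genuinely dominates the competition and the ``circle of roots'' phenomenon cannot be avoided. Once that dominant-balance lemma is in place, the contradiction with hyperbolicity (which forbids roots off $\bR$) is automatic.
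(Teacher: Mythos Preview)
What you have written is a plan, not a proof: you correctly isolate the nontrivial direction, but none of the proposed test-polynomial arguments is carried through, and you yourself flag the ``main obstacle'' as unresolved. Two specific gaps. First, the reduction to the two-term model $a(x)p(x)+b(x)p(x-m)$ by keeping only the extreme active indices is unjustified: the intermediate terms $q_\ell(x)(x-\ell)^n$ are in general of the same order of magnitude in the region where you expect complex roots, and the dominant-balance/Newton-polygon lemma that would isolate a genuine two-term competition is precisely the piece you say is missing. Second, even in the two-term constant case your description is off: the zeros of $a\,x^n+b\,(x-m)^n$ satisfy $\bigl(x/(x-m)\bigr)^n=-b/a$ and hence cluster near the line $\mathrm{Re}\,x=m/2$ (the locus $|x/(x-m)|=1$), not on a small circle of radius $\sim m/n$ near the origin. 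The conclusion does survive there --- at most two of the $n$ preimages under the M\"obius map $w\mapsto mw/(w-1)$ are real, so for $n\ge 3$ the polynomial is not hyperbolic --- but this corrected argument does not transfer to polynomial coefficients or to three or more active terms without the missing lemma.

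The paper bypasses all asymptotic analysis by invoking the Borcea--Br\"and\'en characterization \cite[Theorem~5]{julius1}: if $T$ preserves $\HP$ then the bivariate symbol $T(e^{-xy})=e^{-xy}\sum_j q_j(x)e^{jy}$ (or its $y$-reflection) is a locally uniform limit of stable bivariate polynomials, so for each fixed real $x_0$ the one-variable function $\sum_j q_j(x_0)e^{\pm jy}$ lies in the Laguerre--P\'olya class. But an exponential sum $\sum_j c_j e^{jy}=P(e^y)$ with at least two nonzero coefficients $c_j$ has infinitely many non-real zeros (each root $w_0$ of $P$ contributes the arithmetic progression $\log w_0+2\pi i\mathbb Z$), so it is never in that class. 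Hence for every real $x_0$ at most one $q_j(x_0)$ is nonzero, which forces at most one $q_j\not\equiv 0$. The brevity of the paper's argument is bought with a deep external theorem; your elementary route may well be completable, but doing so would amount to reproving a fragment of \cite{julius1} by hand.
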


\begin{proof} If $T$ satisfies the conditions of the lemma, then $T$ is trivially a hyperbolicity preserver. 

Consider the bivariate symbol
$$
G(x,y)= T(e^{-xy}) = \sum_{j=0}^kq_j(x)e^{-(x-j)y}= e^{-xy}\sum_{j=0}^kq_j(x)e^{jy}.
$$
If $T$ is a hyperbolicity preserver, then  by \cite[Theorem 5]{julius1}, $G(x,y)$ or $G(x,-y)$ is the limit (uniform on compact subsets of $\bC$) of bivariate polynomials that are non-vanishing whenever $\text{Im }x >0$ Êand $\text{Im }y >0$. It follows that for each $x_0 \in \bR$ the function 
$$
\sum_{j=0}^kq_j(x_0)e^{\pm (x_0-j)y}
$$
is in the Laguerre--P\'olya class. However this is the case only if $q_j(x_0) \neq 0$ for at most one $j$, from which it follows that $q_j(x) \not \equiv 0$ for at most one $j$. Since $T(f)= q_j(x)f(x-j)$ this forces $q_j(x)$ to be hyperbolic. 
%
%
\end{proof}


Before we present a proof of Theorem~\ref{th:FDRiesz} 
we need  to recall some notation and  well known results about hyperbolic polynomials. 
Let $\ga_1 \leq \ga_2 \leq \cdots \leq \ga_n$ and $\de_1 \leq \de_2 \leq \cdots \leq \de_m$ be the zeros of two hyperbolic polynomials $p$ and $q$. These zeros \emph{interlace} if either $\ga_1 \leq \de_1 \leq \ga_2 \leq \de_2 \leq \cdots$ or $\de_1 \leq \ga_1 \leq \de_2 \leq \ga_2 \leq \cdots$. 
A pair of  hyperbolic polynomials $(p,q)$ are in \emph{proper position}, written $p \ll q$, if their zeros interlace and 
$p(x)q'(x) - p'(x)q(x) \geq 0$ for all $x \in \bR$.  Note that if the zeros of two hyperbolic polynomials $p$ and $q$ interlace,  then either $p \ll q$ or $q \ll p$. By convention we set $0 \ll p$ and $p \ll 0$ for any hyperbolic polynomial $p$. The next lemma follows from a simple count of sign changes, see \cite[Theorem 6.3.8]{RS}, \cite[Lemma 2.4]{W} and   
\cite[Lemma 2.6]{London}. 

\begin{lemma}\label{cone} \mbox{ } \\
\rm{(a)} Let $p$ be a hyperbolic polynomial. Then the sets 
$$
\{ q \in \bR[x] : q \ll p \} \quad \mbox{ and } \quad \{ q \in \bR[x] : p \ll q \} 
$$
are convex cones. 

\noindent
\rm{(b)} If $p \ll q$,  then $p \ll q + \alpha p$ and $p+\alpha q \ll q$ for all $\alpha \in \bR$.  
\end{lemma}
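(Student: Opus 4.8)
The plan is to reduce both parts to a single \emph{sign-counting} principle. For a hyperbolic polynomial $p$ with simple zeros $\ga_1 < \cdots < \ga_n$, the proper-position relations $q \ll p$ and $p \ll q$ are governed entirely by the signs of $q$ at the points $\ga_j$ together with the sign of the Wronskian $W(p,q) = pq' - p'q$. Indeed, evaluating the relevant Wronskian condition at a zero $\ga_j$ of $p$ kills one term and forces $\mathrm{sign}\, q(\ga_j)$ to equal $\pm\,\mathrm{sign}\, p'(\ga_j)$, with sign $+$ for $q \ll p$ and sign $-$ for $p \ll q$. Since $p$ has simple zeros, $p'(\ga_j)$ alternates in sign with $j$, so in either case the numbers $q(\ga_1),\ldots,q(\ga_n)$ form a weakly alternating sequence. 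First I would record this observation precisely, and conversely note that a weakly alternating sign pattern at the $\ga_j$ produces at least $n-1$ sign changes, hence at least $n-1$ real zeros of $q$ interlacing the $\ga_j$; together with a degree bound this yields that $q$ is hyperbolic and interlaces $p$.

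For part (a), both closures are then immediate. Multiplying $q$ by $c \geq 0$ scales $W(p,q)$ by $c$ and scales each $q(\ga_j)$ by $c$, preserving both the Wronskian sign and the alternation. For additivity, if $q_1,q_2$ both lie in $\{q : q \ll p\}$ (resp.\ $\{q : p \ll q\}$), then $W(p,q_1+q_2) = W(p,q_1) + W(p,q_2)$ has the correct sign by linearity, and $(q_1+q_2)(\ga_j) = q_1(\ga_j) + q_2(\ga_j)$ inherits the common alternating pattern because two numbers of the same sign cannot cancel. The sign-counting principle gives that $q_1+q_2$ is hyperbolic and interlaces $p$, and the sign of $W(p,q_1+q_2)$ then selects the correct direction of proper position via the dichotomy recorded after the definition, namely that interlacing forces either $p \ll q$ or $q \ll p$.

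Part (b) falls out of the same computation and is in fact cleaner, since it holds for all real $\al$ rather than only $\al \geq 0$. The key algebraic identities are $W(p, q + \al p) = W(p,q)$ and $W(p + \al q, q) = W(p,q)$, so the Wronskian and its sign are unchanged by the shear. For interlacing I would evaluate at the relevant zeros: $(q+\al p)(\ga_j) = q(\ga_j)$ at the zeros $\ga_j$ of $p$, and $(p + \al q)(\de_k) = p(\de_k)$ at the zeros $\de_k$ of $q$; hence the alternating sign patterns, and therefore the interlacing, are exactly those of the original pair and do not depend on $\al$. Combined with the unchanged Wronskian sign this yields $p \ll q + \al p$ and $p + \al q \ll q$.

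The main obstacle is not the generic case but the degeneracies: multiple zeros of $p$, the possibility that $q$ has strictly smaller degree than $p$, and the boundary cases where some $q(\ga_j) = 0$, so that the alternation is only weak and a zero of $q$ collides with one of $p$. I would dispose of these either by a perturbation argument, approximating $p$ and the $q_i$ by nearby polynomials with simple strictly interlacing zeros, applying the generic result, and passing to the limit using that proper position is a closed condition, or by invoking the stated convention $0 \ll p$, $p \ll 0$ when a summand degenerates. Some care is also needed to match leading coefficients, so that the $n-1$ guaranteed sign changes plus degree considerations pin down genuine proper position rather than interlacing that is off by one root; this is the only place where a little bookkeeping is required.
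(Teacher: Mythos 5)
Your argument is correct and is exactly the ``simple count of sign changes'' that the paper itself invokes (it gives no proof, only citing \cite{RS}, \cite{W}, \cite{London}, where the same Wronskian-evaluation and sign-alternation argument appears). The degeneracies you flag (multiple or coincident zeros, degree drops) are real but are handled by the perturbation/closedness argument you describe, so nothing essential is missing.
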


\begin{proof}[Proof of Theorem \ref{th:FDRiesz}]
Let $T(p)(x)= p(x)-\lambda p(x-\alpha)$ where $\alpha, \lambda \geq 0$. We want to prove that $T : \HP_{\geq \beta} \rightarrow \HP_{\geq \beta}$ for all $\beta \geq \alpha$. First we prove it for $\beta = \alpha$. Note that $p \in \HP_{\geq \alpha}$ if and only if $p(x) \ll p(x-\alpha)$. Lemma \ref{cone} (b) implies $T(p)(x) \ll p(x)$ and $T(p)(x) \ll p(x-\alpha)$, which is easily seen to imply $T(p) \in \HP_{\geq \alpha}$. Next we prove that if $p, q \in \HP_{\geq \alpha}$ satisfy $p \ll q$, then $T(p) \ll T(q)$. This will prove Theorem  \ref{th:FDRiesz}, since if 
$p \in   \HP_{\geq \beta} \subseteq  \HP_{\geq \alpha}$, then $p(x) \ll p(x-\beta)$ and thus $T(p)(x) \ll T(p)(x-\beta)$ which is equivalent to 
$T(p) \in   \HP_{\geq \beta} $.  By a continuity argument invoking Hurwitz' theorem on the continuity of zeros \cite[Theorem 1.3.8]{RS} we may assume that $p$ and $q$ have the same degree. To prove that $T$ preserves proper position we claim that it is enough to prove that 
\begin{equation}\label{ab}
T( (x-a)r) \ll T( (x-b)r) 
\end{equation}
whenever $a \leq b$ and $r \in  \HP_{\geq \alpha}$. Indeed, let $a$ be the smallest zero of $p$, $b$ be the greatest zero of $q$ and $r=p/(x-a)$. Then we may construct a sequence of hyperbolic polynomials:
$$
(x-a)r=p=p_0 \ll p_1 \ll \cdots \ll q \ll \cdots \ll p_k = (x-b)r, 
$$
where for each $1\leq i \leq k-1$ there is a factorization  $p_{i-1}(x)= (x-a_i)r_i(x)$ and 
$p_{i}(x)= (x-b_i)r_i(x)$, where $a_i \leq b_i$ and $r_i(x) \in  \HP_{\geq \alpha}$. By hypothesis 
$$
T((x-a)r)=T(p)=T(p_0)  \ll \cdots \ll T(q) \ll \cdots \ll T(p_k) = T((x-b)r), 
$$
and $T((x-a)r) \ll T((x-b)r)$ which implies $T(p) \ll T(q)$ as claimed.

It remains to prove \eqref{ab}. Since $T((x-b)p)= T((x-a)p)-(b-a)T(p)$ it is, by Lemma \ref{cone} (a) and invariance under translation, enough to prove that 
$T(p) \ll T(xp)$ for all  $p \in  \HP_{\geq \alpha}$. Now 
$$
T(xp)= (x-\alpha)T(p)+ \alpha p.
$$
Since $T(p) \ll p$ and $T(p) \ll (x-\alpha)T(p)$, Lemma \ref{cone} b implies $T(p) \ll  (x-\alpha)T(p)+ \alpha p$ as desired. 

Finally suppose $p \in  \HP_{\geq \alpha}^+$ and $\lambda \geq 1$. Write 
$p(x)= A\prod_{i=1}^n(x-\theta_i)$ where $\theta_i \geq 0$ for all $i$. Then 
for $y\geq 0$ 
$$
\frac {p(-y)}{p(-y-1)} = \prod_{i=1}^n \frac {y+\theta_i} {y+\theta_i+1} <1 \leq \lambda.  
$$
Hence $T(p)(-y) \neq 0$, which proves that $T$ preserves $\HP_{\geq \alpha}^+$. 
\end{proof}


\begin{proof}[Proof of  Theorem~\ref{th:HerPou}]  Theorem~\ref{th:FDRiesz} implies  that if the symbol polynomial $Q_T(t)=a_0+a_1t+\cdots+a_kt^k$ has only  real and non-negative zeros,  then the finite difference operator $T(p(x))=a_0p(x)+a_1p(x-1)+\cdots+a_kp(x-k)$ is a discrete hyperbolicity preserver.   We need to prove the necessity of the latter condition. Consider the action of $T$ on the Pochhammer polynomials $(x)_i$. 
Assuming that $i\ge k$, we get 
$$T((x)_i)=(x-k)\cdots(x-i+1)R_i(x),$$
where $R_i(x)$ is a hyperbolic polynomial of degree $k$. Observe that 
\begin{equation}\label{limR}
\lim_{i\to \infty}\frac{R_i(ix)}{i^k}=x^kQ\left(\frac{x-1}{x}\right),
\end{equation}
where $Q_T(t)$ is the above symbol polynomial. Hence if $T$ is a discrete hyperbolicity preserver, then $Q_T(t)$ is  hyperbolic. We need to show that its zeros are non-negative. Suppose that $Q_T(y)=0$ for $y<0$. The assumption $y=(x-1)/{x}$ implies  $0<x<1$.  By \eqref{limR} and Hurwitz' theorem on the continuity of zeros  it follows that there are real numbers $0<a<b<1$ and an integer $i_0$ such that 
$R_i(ix)$ has a zero in the interval $(a,b)$ whenever $i>i_0$. Hence $R_i(x)$ has a zero in $(ia,ib)$ for all $i >i_0$. If we choose $i>i_0$ large enough so that $(ia,ib)\subset (k,i)$, we see that the mesh of  $T((x)_i)=(x-k)\cdots(x-i+1)R_i(x)$ 
is strictly smaller than $1$ which contradicts  our assumption. Hence all zeros of $Q_T(t)$ are non-negative.  
\end{proof}

\section {Proving results on discrete multiplier sequences}

 To prove Theorem~\ref{th:FDM}  it suffices to consider the operator: 
$$W_{\la}(p)=p(x)+\lambda x\Delta(p(x))= p(x)+ \lambda x \left( p(x)-  p(x-1) \right).$$

\begin{proposition}
\label{claim2} For each $\lambda \ge 0$,
$$W_{\la}(p) : \HP^{+}_{\ge 1} \rightarrow \HP^+_{\ge 1}, $$
i.e. $W_{\la}$ is a discrete multiplier sequence. 
 \end{proposition}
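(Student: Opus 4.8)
The plan is to show that $W_\lambda$ preserves $\HP^+_{\ge 1}$ by combining a non-vanishing argument (to rule out complex and negative roots) with a mesh-type argument (to control the spacing of the real positive roots). First I would observe that it suffices to treat $\lambda>0$, since $\lambda=0$ is trivial, and to note that $W_\lambda$ has polynomial coefficients $q_0(x)=1+\lambda x$, $q_1(x)=-\lambda x$, so its order is one. Take $p(x)=A\prod_{i=1}^n(x-\theta_i)$ with $0\le\theta_1\le\cdots\le\theta_n$ and $\mesh(p)\ge 1$; I want to locate the zeros of
$$
W_\lambda(p)(x)=p(x)+\lambda x\bigl(p(x)-p(x-1)\bigr)=(1+\lambda x)p(x)-\lambda x\,p(x-1).
$$

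The first key step is non-vanishing off the positive real axis. For $x\le 0$ one checks the signs: writing the equation $W_\lambda(p)(x)=0$ as $p(x)/p(x-1)=\lambda x/(1+\lambda x)$ wherever $p(x-1)\ne 0$, the right-hand side is $\ge 1$ or $\le 0$ depending on the sign of $1+\lambda x$, while for $x\le 0$ the ratio $p(x)/p(x-1)=\prod_i (x-\theta_i)/(x-1-\theta_i)$ has absolute value strictly between $0$ and $1$ and fixed sign, so the two sides cannot match; the boundary cases $x=0$ and $p(x-1)=0$ are handled directly. This shows $W_\lambda(p)$ has no non-positive real zeros. For genuinely complex zeros I would argue that $W_\lambda(p)$ is real-rooted at all: here the cleanest route is to use the interlacing/proper-position machinery of Lemma \ref{cone} exactly as in the proof of Theorem \ref{th:FDRiesz}, since $x\Delta(p)(x)=x p(x)-x p(x-1)$ and both $x p(x)$ and $x p(x-1)$ lie in a common proper-position chain when $p\in\HP^+_{\ge 1}$; combining $W_\lambda(p)=p+\lambda(xp - xp(x-1))$ with Lemma \ref{cone}(a),(b) should force $W_\lambda(p)$ hyperbolic, and in fact $W_\lambda(p)\ll p$ or a similar relation.

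The second, and I expect the hardest, step is the mesh bound $\mesh(W_\lambda(p))\ge 1$. Since Remark 1 warns that $W_\lambda$ is \emph{not} mesh-increasing in general, one cannot hope for $\mesh(W_\lambda(p))\ge\mesh(p)$; the claim is only that the mesh stays $\ge 1$, using crucially that the zeros $\theta_i$ are non-negative. My plan is an explicit zero-counting argument on consecutive unit intervals. Evaluate $W_\lambda(p)$ at an integer-spaced or root-spaced net: for instance, analyze the sign of $W_\lambda(p)$ at the points $\theta_j$ and $\theta_j+1$, using $W_\lambda(p)(\theta_j)=\lambda\theta_j\bigl(p(\theta_j)-p(\theta_j-1)\bigr)$ together with the fact that $\theta_j-1\le\theta_{j-1}$ (from $\mesh(p)\ge 1$) pins down $\operatorname{sign} p(\theta_j-1)$. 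One shows that on each interval of the form $(\theta_{j-1},\theta_j]$ — equivalently on each unit-or-longer gap — $W_\lambda(p)$ changes sign at most once, hence its $n$ real zeros (which we already know are positive) are spread out with at most one per such interval, giving $\mesh\ge 1$. Carrying this out requires care at the ends (behavior near $0$, where the factor $1+\lambda x$ matters, and near $+\infty$) and a bookkeeping of multiplicities when $p$ has repeated roots, but it is a finite case analysis. The main obstacle is making the sign-change count on the boundary intervals rigorous without assuming $\mesh(p)$ is strictly larger than $1$; a perturbation/continuity argument via Hurwitz' theorem — first proving the strict-mesh case, then taking limits — may be the right way to finish.
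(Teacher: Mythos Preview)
Your ratio argument for non-vanishing on $(-\infty,0)$ is fine, and you correctly sense that Lemma~\ref{cone} is the right tool for hyperbolicity. The genuine gap is in your mesh step: the implication ``at most one zero of $W_\lambda(p)$ in each interval $(\theta_{j-1},\theta_j]$, hence $\mesh(W_\lambda(p))\ge 1$'' is simply false. Those intervals have length $\ge 1$, but a zero sitting near the right end of $(\theta_{j-1},\theta_j]$ and another sitting near the left end of $(\theta_j,\theta_{j+1}]$ can be arbitrarily close. What you would actually need is a \emph{two-sided} localization of each zero $\mu_j$, something like $\theta_{j-1}+1\le \mu_j\le \theta_j$; only then does $\mu_{j+1}-\mu_j\ge (\theta_j+1)-\theta_j=1$ follow. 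Your sign checks at the points $\theta_j$ and $\theta_j+1$ could in principle be pushed to yield this (the signs at $\theta_j$ and $\theta_j+1$ in fact agree, forcing the unique zero in $(\theta_j,\theta_{j+1})$ to lie in $(\theta_j+1,\theta_{j+1})$), but you have not stated or used this, and the boundary and multiplicity issues you flag would still need to be handled.

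The paper short-circuits all of this by proving, via Lemma~\ref{cone}(a), the pair of relations $W_\lambda(p)\ll p(x)$ and $W_\lambda(p)\ll p(x-1)$ simultaneously. The ingredients are just $\Delta p\ll p(x)$, $\Delta p\ll p(x-1)$ (both immediate from $p(x)\ll p(x-1)$), and the observation that the zeros of $\Delta p$ are non-negative so that multiplying by $\lambda x$ preserves these relations; then add $p$ using the cone property. The first relation gives $\mu_j\le\theta_j$, the second gives $\mu_{j+1}\ge\theta_j+1$, and together they yield $\mesh\ge 1$ with no sign-chasing, no Hurwitz perturbation, and no separate treatment of hyperbolicity. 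The positivity of the zeros then drops out from the fact that $x\Delta p$ and $p$, both in $\HP^+_{\ge 1}$ and in proper position, have the same sign on the negative axis. In short, you split the problem into three pieces (no negative zeros, hyperbolic, mesh $\ge 1$) and attacked each separately; the paper's point is that the single package $W_\lambda(p)\ll p(x)$ together with $W_\lambda(p)\ll p(x-1)$ delivers all three at once.
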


\begin{proof} 
Let $p \in \HP^{+}_{\ge 1}$. As in the proof of Theorem \ref{th:FDRiesz} we observe that 
$$
p(x) - p(x-1) \ll p(x) \mbox{ and } p(x) - p(x-1) \ll p(x-1). 
$$
Since the degree of $p(x)-p(x-1)$ is one less than that of $p(x)$ and since all the zeros of $p(x)-p(x-1)$ are non-negative (they interlace those of $p$) we have 
$$
\lambda x(p(x) - p(x-1)) \ll p(x) \mbox{ and } \lambda x(p(x) - p(x-1)) \ll p(x-1). 
$$
Since $p(x) \ll p(x)$ and $p(x) \ll p(x-1)$, Lemma \ref{cone} (a) implies 
$$
W_\lambda(p)(x) \ll p(x) \mbox{ and } W_\lambda(p)(x) \ll p(x-1),
$$
which in turn implies $W_\lambda(p) \in \HP_{\ge 1}$. Since 
$$
\HP^{+}_{\ge 1} \ni x(p(x) - p(x-1)) \ll p(x) \in \HP^{+}_{\ge 1},
$$
these polynomials have the same sign for negative real numbers which implies $W_\lambda(p) \in \HP^+_{\ge 1}$. 
\end{proof}

The next result is due to F.~Brenti \cite{Bre}. We provide a proof here for completeness.  

\begin{lemma}\label{brenti}
Let $T : \bR[x] \rightarrow \bR[x]$ be defined by 
$$
T(x^i) = (x)_i.
$$
If all the zeros of the polynomial $p(x)$ are real and non-negative, then 
$T(p) \in \HP_{\geq 1}^+$. 
\end{lemma}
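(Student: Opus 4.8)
\textbf{Proof proposal for Lemma \ref{brenti}.}

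The plan is to reduce the statement to a single building block and then iterate it using the convexity of proper position. First I would observe that the operator $T$ is linear, so it is natural to pass from the root factorization of $p$ to a sequence of intermediate polynomials obtained one factor at a time. Concretely, write $p(x) = A\prod_{j=1}^n (x - \theta_j)$ with $\theta_j \geq 0$, and consider the partial products. The key point will be to understand how $T$ interacts with multiplication by a linear factor $(x-\theta)$. Since $T(x^i) = (x)_i$ and $(x)_{i+1} = (x)_i\,(x - i)$, one computes that $T(x \cdot x^i) = (x)_{i+1} = (x - i)(x)_i$, which is not simply $(x-\text{const})T(x^i)$; the shift depends on the degree. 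This is the feature that forces the argument to be carried out degree-homogeneously, i.e.\ we should apply $T$ to homogeneous pieces or track the operator $x \mapsto$ "multiply then lower" carefully. A cleaner route: note that $T(xq(x)) $ for $q$ of degree $d$ equals $(x-d)T(q)(x)$ when $q = x^d$, and extend by linearity with attention to the top degree, obtaining a recursion of the shape $T((x-\theta)q) = (x-d)T(q) - \theta T(q) + (\text{correction})$.

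The heart of the matter is the following inductive claim: if $q \in \HP_{\geq 1}^+$ has degree $d$, then for every $\theta \geq 0$ we have $T((x-\theta)q) \in \HP_{\geq 1}^+$, and moreover the family $\{T((x-\theta)q) : \theta \geq 0\}$ varies in a controlled interlacing manner. The most efficient packaging is to prove simultaneously, by induction on $\deg p$, the two assertions: (i) $T(p) \in \HP_{\geq 1}^+$, and (ii) $T(p) \ll T(xp)$ (proper position). Assertion (ii) is exactly what lets the induction step go through, in the same spirit as the proof of Theorem \ref{th:FDRiesz}: given $p \in \HP_{\geq 1}^+$ and a new non-negative root $\theta$, write $T((x-\theta)p) = T(xp) - \theta T(p)$, and use Lemma \ref{cone}(a),(b) together with (i) and (ii) for $p$ to conclude that $T((x-\theta)p)$ is hyperbolic with the right sign pattern and that the mesh-one/positivity conditions are inherited. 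Showing $T(p) \ll T(xp)$ in the step will require identifying $T(xp)$ in terms of $T(p)$; here the identity $T(xp)(x) = x\,T(p)(x-1)$ (or a close variant — this should be checked on the basis $x^i$, where $T(x \cdot x^i) = (x)_{i+1} = x (x-1)_i = x\,(T(x^i))(x-1)$, so indeed $T(xp)(x) = x\,T(p)(x-1)$) is the crucial computation. Given that identity, $T(p) \ll T(xp) = x\,T(p)(x-1)$ follows because $T(p) \in \HP_{\geq 1}^+$ implies $T(p)(x) \ll T(p)(x-1)$, and multiplication by the non-negative-rooted factor $x$ preserves proper position when the degree drop is in the right direction — exactly the argument used for $W_\lambda$ in Proposition \ref{claim2}.

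So the skeleton is: (1) verify the operator identity $T(xp)(x) = x\,T(p)(x-1)$ on monomials and extend by linearity; (2) set up the induction on $\deg p$ with the paired hypothesis "$T(p)\in\HP_{\geq1}^+$ and $T(p)\ll T(xp)$"; (3) in the inductive step, factor off one non-negative root, use $T((x-\theta)p)=T(xp)-\theta T(p)$, and apply Lemma \ref{cone} to get hyperbolicity and proper position; (4) check the sign-for-negative-arguments condition to upgrade $\HP_{\geq 1}$ to $\HP_{\geq 1}^+$, just as at the end of the proof of Proposition \ref{claim2}. The main obstacle I anticipate is step (1)/(3): making sure the degree-dependent shift in $T(xp)(x) = x\,T(p)(x-1)$ is handled correctly and that the resulting proper-position relations are the ones Lemma \ref{cone}(b) actually delivers (one must be careful which of $T(p)$, $T(xp)$ is the "smaller" polynomial in $\ll$, and that the convex-cone lemma is applied with the correct orientation). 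Once the identity and the correct interlacing orientation are pinned down, the induction is routine.
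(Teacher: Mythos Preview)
Your approach is essentially the paper's own proof. The paper proceeds by induction on $\deg p$, writes $p=(x-\alpha)q$, sets $Q=T(q)$, and uses precisely the identity you discovered on monomials, $T(xq)(x)=xQ(x-1)$, to obtain $T(p)=xQ(x-1)-\alpha Q(x)$; then Lemma~\ref{cone}(a) applied to the pairs $xQ(x-1)\ll -Q(x)$, $-\alpha Q(x)\ll -Q(x)$ and $-xQ(x-1)\ll -Q(x-1)$, $-\alpha Q(x)\ll -Q(x-1)$ yields $T(p)\ll -Q(x)$ and $T(p)\ll -Q(x-1)$, which together force $T(p)\in\HP_{\geq 1}^+$. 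One small simplification relative to your plan: carrying the auxiliary hypothesis $T(p)\ll T(xp)$ through the induction is unnecessary, since this relation (and the second interlacing needed to pin down the mesh) follows directly from $Q\in\HP_{\geq 1}^+$ at each step.
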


\begin{proof}
We prove Lemma~\ref{brenti}  by induction on $n$, the degree of $p$. The cases $n=0$ and $1$ are trivial so assume $p(x)$ is polynomial of degree $n+1 \geq 2$ and write 
$$
p(x) = (x-\alpha)q(x)=(x-\alpha)\sum_{i=0}^n \ga_i x^i,
$$
where $\alpha \geq 0$. 
By induction we know that $Q(x) = T(q) \in \HP_{\geq 1}^+$.  An elementary manipulation shows
$$
T(p)= xQ(x-1) -\alpha Q(x).
$$
Since $Q(x)  \in \HP_{\geq 1}^+$:
$$
xQ(x-1) \ll -Q(x), -\alpha Q(x) \ll -Q(x), -xQ(x-1) \ll -Q(x-1), -\alpha Q(x)  \ll -Q(x-1), 
$$
so by Lemma \ref{cone} (a):
$$
T(p) \ll -Q(x) \quad \mbox{ and } \quad T(p) \ll -Q(x-1),
$$
which proves $T(p)  \in \HP_{\geq 1}^+$. 
\end{proof}

\begin{proof}[Proof of Proposition~\ref{d-c}]
Suppose that all zeros of a test polynomial $p(x)=\ga_0+\ga_1x+ \cdots +\ga_nx^n$ are real and non-negative. By Lemma \ref{brenti} 
$$
\sum_{i=0}^n \ga_i \rho^i (x)_i \in \HP_{\geq 1}^+
$$
for all $\rho >0$. But then 
$$
\sum_{i=0}^n \ga_i \rho^i \al_i (x)_i \in \HP_{\geq 1}^+ \mbox{ and thus } \sum_{i=0}^n \ga_i \rho^i \al_i (x/\rho)_i \in \HP_{\geq \rho}^+
$$
for all $\rho >0$. Letting $\rho \to 0$ we see that 
$$
\sum_{i=0}^n \ga_i \al_i x^i \in \HP_{\geq 0}^+,
$$
and hence $\{\al_i\}_{i=0}^\infty$ is an ordinary multiplier sequence. 
\end{proof}

\begin{proof}[Proof of Theorem~\ref{phi-k}] 
 Proposition \ref{claim2} claims that the sequence $\{1+\lambda i\}_{i=0}^\infty$ is a discrete multiplier sequence for each $\lambda \geq 0$. Since the set of all discrete multiplier sequences is a semi-group under composition all hyperbolic polynomials with negative zeros  give rise to discrete multiplier sequences  via $p \mapsto \{p(i)\}_{i=0}^\infty$. The set $\mathcal{L\!\!-\!\!P}_+$ is the closure of such polynomials, from which the theorem follows.
\end{proof}

\begin{lemma}\label{altn}
Suppose $p(x) = \sum_{i=0}^n a_i (x)_i \in \HP_{\geq 1}^+$ with $a_n >0$.  
Then $(-1)^{n-i}a_i \geq 0$ for all $0\leq i \leq n$. 
\end{lemma}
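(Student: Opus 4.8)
The plan is to evaluate $p$ at a suitable sequence of negative integers and extract the signs of the coefficients $a_i$ from the signs of these values. The key observation is that the Pochhammer polynomial $(x)_i = x(x-1)\cdots(x-i+1)$ vanishes at $x = 0, 1, \ldots, i-1$, so evaluating at a non-positive integer $x = -m$ gives $(-m)_i = (-1)^i m(m+1)\cdots(m+i-1) = (-1)^i (m+i-1)!/(m-1)!$, which has sign $(-1)^i$ (for $m \geq 1$; and $(-0)_i$ vanishes for $i\ge 1$). In particular, for $x = 0, -1, -2, \ldots$ the polynomial $p$ is evaluated as an alternating-type sum $\sum_i a_i (-1)^i (\text{positive})$.

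First I would record that since $p \in \HP_{\geq 1}^+$ all roots of $p$ are non-negative, so $p$ has no roots among the negative integers $-1, -2, \ldots$; moreover, since the leading coefficient of $p$ (in the ordinary sense) is $a_n > 0$ and $\deg p = n$, the sign of $p(-m)$ for $m \geq 1$ equals $(-1)^n$ for every positive integer $m$ (as $p$ is eventually of sign $(-1)^n$ to the left of all its roots, and it has no negative-integer roots and does not change sign on $(-\infty, 0)$ away from its roots — more carefully, $p(x)$ for $x < 0$ has the sign of $(-1)^n a_n = (-1)^n$). Then I would set up a finite difference / telescoping argument: define $p^{(0)} = p$ and note that the operator sending $\sum a_i (x)_i$ to "drop the top term after a shift" can be realized concretely. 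The cleanest route is to use the backward-style recursion on the Pochhammer basis: writing $p(x) = \sum_{i=0}^n a_i (x)_i$, one has the identity expressing $a_i$ via finite differences of the values $p(0), p(1), \ldots$ — namely $a_i = \Delta^i[p](0)/i!$ in the forward-difference sense is the wrong basis; instead I would use that $\{(x)_i\}$ is the Newton basis at the nodes $0,1,2,\ldots$, so $a_i = \big(\Delta^i p\big)(0)$ where $\Delta$ is the forward difference $f(x)\mapsto f(x+1)-f(x)$, hence $a_i = \sum_{j=0}^i (-1)^{i-j}\binom{i}{j} p(j)$. This expresses $a_i$ through the values of $p$ at $0,1,\ldots,i$, which are not the negative integers, so this is not immediately the alternation I want.

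The better approach — and the one I would actually carry out — is induction on $n$, peeling off the leading term. Suppose the result holds for degree $< n$. Write $p(x) = \sum_{i=0}^n a_i(x)_i$ with $a_n > 0$. Applying $\Delta$ (forward difference $f(x) \mapsto f(x+1) - f(x)$), one has $\Delta[(x)_i] = i\,(x)_{i-1}$, so $\Delta p = \sum_{i=1}^n i\, a_i (x)_{i-1}$ is a polynomial of degree $n-1$ with leading coefficient $n a_n > 0$. The crucial point is that $\Delta$ maps $\HP_{\geq 1}^+$ into $\HP_{\geq 1}^+$: this follows because $\Delta p (x) = p(x+1)-p(x)$, and by Theorem \ref{th:FDRiesz} applied with $\alpha = 1$, $\lambda = 1$ (after the harmless reflection $x \mapsto x+1$ or directly, since $p(x) - p(x-1) \ll p(x)$ and its roots interlace and are shifted), $\Delta p$ is hyperbolic with mesh $\geq 1$ and non-negative roots — indeed the roots of $\Delta p$ interlace those of $p$ and lie in the convex hull of the roots of $p$, hence are non-negative. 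So by the inductive hypothesis $(-1)^{(n-1)-(i-1)} (i a_i) \geq 0$, i.e. $(-1)^{n-i} a_i \geq 0$, for all $1 \leq i \leq n$. It remains only to handle $i = 0$: for that I would evaluate at $x$ a large negative integer $-m$, where every term $(x)_i$ with $i \geq 1$ vanishes at $x=0$ but at $x = -m$ contributes sign $(-1)^i$; more simply, $a_0 = p(0)$ since $(0)_i = 0$ for $i \geq 1$ and $(0)_0 = 1$, and $p(0) \geq 0$ because all roots of $p$ are non-negative and the leading coefficient is positive so $p(0) = a_n \prod(\text{non-negative}) \geq 0$; this gives $(-1)^{n-0} a_0 \geq 0$? — not quite, so instead I note $a_0 = p(0)$ and the sign of $p$ just left of its smallest root is $(-1)^n$, and $p(0) \ge 0$ with $p(0)$ having the sign $(-1)^n$ unless $0$ is a root; either way the sign of $a_0 = p(0)$ is $(-1)^n$ or $0$, giving $(-1)^{n} a_0 \geq 0$. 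The base case $n = 0$ is $a_0 > 0$, trivially of the right sign.

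The main obstacle is verifying cleanly that $\Delta$ preserves $\HP_{\geq 1}^+$ with the correct leading sign and non-negativity of roots; once that is in hand — and it is essentially Theorem \ref{th:FDRiesz} together with the interlacing/convex-hull property of the roots of $p(x+1) - p(x)$ — the induction closes immediately, with the $i=0$ case dispatched by the direct evaluation $a_0 = p(0)$. I would present the argument in that order: (1) $\Delta$ acts as $i a_i (x)_{i-1} \leftarrow a_i (x)_i$; (2) $\Delta$ preserves $\HP_{\geq 1}^+$ and raises/keeps the sign pattern; (3) induct; (4) settle $i = 0$ separately.
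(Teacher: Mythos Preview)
Your approach is essentially the same as the paper's: handle $i=0$ by observing $a_0=p(0)$ has sign $(-1)^n$ (since $p$ has $n$ non-negative roots and positive leading coefficient), then apply the forward difference $p(x)\mapsto p(x+1)-p(x)$, which sends $\sum a_i(x)_i$ to $\sum (i+1)a_{i+1}(x)_i$ and preserves $\HP_{\geq 1}^+$, and iterate. The only quibbles are notational (the paper reserves $\Delta$ for the backward difference and writes $\nabla$ for the forward one) and expository (your write-up meanders through a couple of dead ends and briefly misstates ``$p(0)\ge 0$'' before correcting to the right sign $(-1)^n$); trimmed down, your argument is the paper's proof.
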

\begin{proof}
Since $p(x)$ has $n$ non-negative zeros and $p(x) >0$ for $x>0$ large enough, we have $(-1)^{n}p(0)=(-1)^{n}a_0 \geq 0$. As in the proof of Proposition \ref{claim2}, we see that $\nabla(p) \ll p$ and 
$\nabla(p) \in \HP_{\geq 1}^+$. Here 
$\nabla p(x)= p(x+1)-p(x)$ is the forward difference operator.  Now 
$$
\nabla(p) = \sum_{i=0}^{n-1} (i+1)a_{i+1} (x)_i,
$$
and the lemma follows by iterating the argument for $i=0$. 
\end{proof}

An immediate consequence of Lemma \ref{altn} is: 

\begin{corollary}\label{signs}
All non-zero entries of a discrete multiplier sequence have the same sign.
\end{corollary}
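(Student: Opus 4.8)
The plan is to compare, for any two indices $i<j$ with $\alpha_i\neq 0$ and $\alpha_j\neq 0$, the sign pattern that Lemma~\ref{altn} forces on a suitable test polynomial with the sign pattern it forces on the image of that polynomial under the diagonal operator $T_\A$. Concretely, I would first produce, for each $n$, a polynomial in $\HP^+_{\ge 1}$ of degree $n$ all of whose Pochhammer coefficients are nonzero; then, feeding such a polynomial (with $n=j$) into $T_\A$ and applying Lemma~\ref{altn} twice — once to the input, once to the output — should pin down $\mathrm{sign}(\alpha_i)=\mathrm{sign}(\alpha_j)$.

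For the test polynomial I would take $p(x)=(x-t)_n$ with any fixed $t>0$. Its zeros are $t,t+1,\dots,t+n-1$, which are nonnegative with consecutive gaps equal to $1$, so $p\in\HP^+_{\ge 1}$. To read off its Pochhammer coefficients I would use the Vandermonde-type identity $(x+y)_n=\sum_{k=0}^n\binom{n}{k}(x)_k(y)_{n-k}$ with $y=-t$, which gives $p(x)=\sum_{k=0}^n a_k(x)_k$ with $a_k=\binom{n}{k}(-t)_{n-k}$. Since $(-t)_{n-k}=(-1)^{n-k}\prod_{m=0}^{n-k-1}(t+m)$ and $t>0$, every factor $t+m$ is positive, so each $a_k$ is nonzero with $\mathrm{sign}(a_k)=(-1)^{n-k}$ (and $a_n=1$). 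This is consistent with Lemma~\ref{altn}, and the nonvanishing of every $a_k$ is precisely the feature I need.

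Next I set $n=j$ and consider $q:=T_\A(p)=\sum_{k=0}^j \alpha_k a_k (x)_k\in\HP^+_{\ge 1}$. Its leading coefficient is $\alpha_j a_j=\alpha_j\neq 0$, so $q$ has degree exactly $j$; after replacing $q$ by $-q$ if necessary so that the leading coefficient is positive, Lemma~\ref{altn} yields $(-1)^{j-k}\alpha_k a_k$ having the sign of $\alpha_j$ for every $k$ with $\alpha_k\neq 0$. Substituting $a_k=(-1)^{j-k}|a_k|$ makes the factors $(-1)^{j-k}$ cancel, leaving $\mathrm{sign}(\alpha_k)=\mathrm{sign}(\alpha_j)$ for each such $k$, in particular for $k=i$. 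Since $i<j$ were arbitrary nonzero indices, all nonzero entries share the sign of $\alpha_j$, which is the assertion.

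The only step with genuine content beyond Lemma~\ref{altn} is the construction of a degree-$n$ member of $\HP^+_{\ge 1}$ with no vanishing Pochhammer coefficient; everything else is sign bookkeeping. I expect the main (and rather minor) obstacle to be exactly the coefficient computation for $(x-t)_n$, i.e. invoking the falling-factorial Vandermonde identity and checking positivity of the factors $t+m$. One should also take care to confirm that $q$ retains degree $j$ — this is where the hypothesis $\alpha_j\neq 0$ enters — so that Lemma~\ref{altn} is applied in the correct degree.
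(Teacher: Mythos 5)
Your argument is correct, and it is exactly the fleshing-out of what the paper leaves implicit when it calls Corollary~\ref{signs} an ``immediate consequence'' of Lemma~\ref{altn}: one feeds a polynomial of $\HP^+_{\ge 1}$ with all Pochhammer coefficients nonzero (your $(x-t)_j$ with $t>0$, whose coefficients $\binom{j}{k}(-t)_{j-k}$ strictly alternate) into the diagonal operator and applies the sign-alternation of Lemma~\ref{altn} to the output. The degree check via $\alpha_j a_j=\alpha_j\neq 0$ and the reduction to $-q$ when $\alpha_j<0$ are both handled properly, so nothing is missing.
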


Next we give the proof of the characterization of trivial discrete multiplier sequences. 

\begin{proof}[Proof of Proposition \ref{trivial}]
Suppose $\{\al_i\}_{i=0}^\infty$ is a trivial discrete multiplier sequence. Then, by Corollary \ref{signs}, we may assume that all entries are nonnegative. The only if direction now follows from the well known fact that all nonnegative and trivial multiplier sequences are of the desired form.

Assume that the sequence satisfies the conditions in the statement of the proposition with $\al_m\al_{m+1} \geq 0$. Let 
$$
p(x) = \sum_{i=0}^n a_i(x)_i, 
$$
 and $T$ be the diagonal finite difference operator associated to $\{\al_i\}_{i=0}^\infty$. Then 
 $$
 T(p)(x)= \al_m a_m (x)_m + \al_{m+1} a_{m+1} (x)_{m+1}= 
 -a(x)_m +b(x)_{m+1}, 
 $$
 where $ab\geq 0$ by Lemma \ref{altn}. If $b =0$ we are done, so assume $b>0$. Then 
$$
T(p)(x)= b(x)_m(x-m-b/a) \in \HP_{\geq 1}^+,
$$
as desired. 
\end{proof}
\begin{proposition}\label{alink}
Let $\{\al_i\}_{i=0}^\infty$ be a discrete multiplier sequence. If $\al_{m+2} >0$ for some $m \geq 0$, then $\al_m \leq \al_{m+1}$. 
\end{proposition}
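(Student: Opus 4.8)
The plan is to test the diagonal operator $T$, determined by $T((x)_i) = \alpha_i (x)_i$, on a one-parameter family of polynomials that degenerates to $(x)_{m+2}$, and to read off $\alpha_m \le \alpha_{m+1}$ from the mesh constraint in the degenerate limit. First I would note that, since $\alpha_{m+2} > 0$, Corollary~\ref{signs} lets me assume every entry is non-negative. The central identity is the factorization $(x)_{m+j} = (x)_m (x - m)_j$, which shows that applying $T$ to any polynomial of the form $(x)_m g(x)$ with $\deg g = 2$ produces $(x)_m$ times an explicit quadratic whose coefficients involve only $\alpha_m, \alpha_{m+1}, \alpha_{m+2}$.

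Concretely, for $\tau \ge 0$ I would set
$$p_\tau(x) = (x)_m\,(x - m - \tau)(x - m - \tau - 1),$$
which lies in $\HP^{+}_{\ge 1}$ since its roots are $0, 1, \dots, m-1, m+\tau, m+\tau+1$. Expanding the quadratic factor in the shifted Pochhammer basis $(x-m)_0, (x-m)_1, (x-m)_2$ yields coefficients $\tau(\tau+1), -2\tau, 1$, so that
$$T(p_\tau)(x) = (x)_m\big[\alpha_{m+2}(x-m)(x-m-1) - 2\tau\alpha_{m+1}(x-m) + \tau(\tau+1)\alpha_m\big].$$
The bracketed factor $h$ is a quadratic with positive leading coefficient $\alpha_{m+2}$, and its two roots are among the roots of $T(p_\tau)$.

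Next, since $T(p_\tau) \in \HP^{+}_{\ge 1}$, the two roots of $h$ must be real and at distance at least $1$; writing this as the condition that the discriminant of $h$ divided by $\alpha_{m+2}^2$ is at least $1$ and simplifying (dividing the resulting inequality by $4\tau$ for $\tau > 0$), I expect to arrive at
$$\alpha_{m+2}(\alpha_{m+1} - \alpha_m) + \tau\,(\alpha_{m+1}^2 - \alpha_{m+2}\alpha_m) \ge 0 \quad \text{for all } \tau > 0.$$
Letting $\tau \to 0^{+}$ kills the second term and leaves $\alpha_{m+2}(\alpha_{m+1} - \alpha_m) \ge 0$, whence $\alpha_m \le \alpha_{m+1}$ because $\alpha_{m+2} > 0$.

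The step I expect to be the crux is the choice of the degenerating family: one must realize that it is the \emph{near-degenerate} configuration $\tau \to 0^{+}$ (roots coalescing towards $m$ and $m+1$) that is extremal, and that it is the mesh condition — the gap between the two roots being $\ge 1$, rather than mere real-rootedness — that forces monotonicity. Indeed, the opposite limit $\tau \to \infty$ only yields the log-concavity inequality $\alpha_{m+1}^2 \ge \alpha_m \alpha_{m+2}$, which is already a consequence of Proposition~\ref{d-c}; the monotonicity is a genuinely discrete phenomenon. The remaining discriminant manipulation is routine.
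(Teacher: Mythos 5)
Your proposal is correct and follows essentially the same route as the paper: the same test family $(x)_m(x-m-a)(x-1-m-a)$, the same expansion in the shifted Pochhammer basis, the same reduction to the discriminant/mesh condition for the quadratic factor, and the same limit $a\to 0^{+}$ to extract $\alpha_{m+2}(\alpha_{m+1}-\alpha_m)\ge 0$. Your closing observation that $a\to\infty$ only recovers the Tur\'an-type inequality $\alpha_{m+1}^2\ge\alpha_m\alpha_{m+2}$, while the monotonicity comes from the near-degenerate end, is an accurate reading of why the argument works.
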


\begin{proof}
Let $a\geq 0$ and consider 
\begin{align*}
&T\big( (x)_m (x-m-a)(x-1-m-a) \big) \\
&=  (x)_m \big( \al_{m+2}(x-m)(x-m-1)-2a\al_{m+1}(x-m)+\al_{m}a(a+1) \big)
\end{align*}
A polynomial $Ax(x-1) -2Bx + C$ where $A,B,C \geq 0$ is in $\HP_{\geq 1}^+$ if and only if $AC \leq B^2+AB$, which yields 
$$
0 \leq a(\al_{m+1}^2-\al_m\al_{m+2})+\al_{m+2}(\al_{m+1}-\al_m), \quad \mbox{ for all } a \geq 0.
$$
In particular $\al_{m+1} \geq \al_m$. 
\end{proof}

%

Let us present more examples of  discrete multiplier sequences. 
\medskip

\noindent
$\bullet$ For any non-negative $i$ the operator $(x)_i \Delta^i$ is a discrete multiplier sequence, i.e. $\{(n)_i\}_{n=0}^\infty$ is a discrete multiplier. It follows from the fact that if $p\in \HP_{\ge 1}^+$ then 
$
\Delta p(x) \ll p(x-1) 
$ 
and therefore all zeros of $\Delta p$ are in $[1,\infty)$. 

\noindent
$\bullet$ For any non-negative $i$ and any polynomial $q$ with all roots in $(-\infty, i]$,  the sequence  $\{p(i)\}_{i=0}^\infty$ where 
$$
p(x)= (x)_i q(x)
$$
is a discrete multiplier sequence.

\section{Final remarks} 

Ê The Hermite-Poulain theorem has a  version in finite degrees (which we could not find explicitly stated in the literature): 

\begin{proposition}\label{pr:finitedegree} A differential operator $T=a_0+a_1{d}/{dx}+\cdots+a_k{d^k}/{dx^k},\; a_k\neq 0$ with constant coefficients preserves the set of   hyperbolic polynomial of degree at most $m$  if and only if the polynomial $T(x^m)$ is hyperbolic.  
\end{proposition}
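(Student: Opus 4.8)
The plan is to prove both implications of the equivalence. The forward direction is nearly immediate: if $T$ preserves hyperbolic polynomials of degree at most $m$, then in particular $x^m$ is hyperbolic (its only root is $0$ with multiplicity $m$), so $T(x^m)$ must be hyperbolic by hypothesis. This settles necessity with essentially no work.

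The substantive direction is the converse: assuming $T(x^m)$ is hyperbolic, I must show $T$ preserves $\HP_{\le m}$, the hyperbolic polynomials of degree at most $m$. The key idea I would pursue is to reduce the finite-degree statement to the classical Hermite-Poulain theorem by factoring. Since $T(x^m)$ is hyperbolic of degree at most $m$, write $T(x^m)=\prod_{j}(a_k\, c_j + \text{lower order structure})$; more precisely, the first step is to relate the symbol polynomial $Q_T(t)=a_0+a_1 t + \cdots + a_k t^k$ to $T(x^m)$. A direct computation gives $T(x^m)=\sum_{j=0}^{\min(k,m)} a_j \frac{m!}{(m-j)!} x^{m-j}$, so the hyperbolicity of $T(x^m)$ is a constraint on the $a_j$ that depends on $m$. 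The natural approach is therefore to recognize that $T(x^m)$ being hyperbolic is equivalent, via the reversal $x^m Q_T^{\leftarrow}$-type substitution, to a statement about a truncated or scaled symbol having real zeros.

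The cleanest route I would take is the Laguerre-P\'olya/apolarity framework: a linear differential operator with constant coefficients of order $k$ acts on a degree-$m$ polynomial, and the finite-degree Hermite-Poulain phenomenon is governed by \emph{apolarity}. Specifically, I expect that $T(x^m)$ hyperbolic forces the relevant finite symbol to interlace appropriately so that for any hyperbolic $p$ of degree $\le m$, the polynomial $T(p)$ is hyperbolic. To make this rigorous I would use a polarization/multiplier argument: every hyperbolic $p$ of degree exactly $m$ can be obtained from $x^m$ by a sequence of operations (composition with a hyperbolicity preserver, or a limiting/continuity argument via Hurwitz' theorem as used repeatedly in the paper), and since $T$ commutes appropriately with the relevant structure, hyperbolicity of $T(x^m)$ propagates to hyperbolicity of $T(p)$.

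The main obstacle will be the converse direction's propagation step: unlike the infinite-degree Hermite-Poulain theorem where one tests against a full basis, here I only know $T(x^m)$ is hyperbolic and must leverage that single datum to control $T$ on all of $\HP_{\le m}$. The difficulty is that a single test polynomial does not obviously generate the whole cone. I expect the resolution to go through the observation that the hyperbolicity of $T(x^m)$ is equivalent to all the relevant coefficient polynomials of $T$ being "compatible" in an apolar sense, combined with a continuity/interlacing argument. An alternative, possibly cleaner, strategy is to argue by contradiction: if $T(p)$ fails to be hyperbolic for some hyperbolic $p$ of degree $\le m$, one deforms $p$ continuously toward $x^m$ (shrinking the root spread to a point) and tracks the zeros of $T$ applied along this path via Hurwitz' theorem, deriving that $T(x^m)$ must then also fail hyperbolicity, contradicting the hypothesis. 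I would expect this deformation-and-Hurwitz approach to be the technically delicate part, and the place where the degree bound $m$ enters essentially.
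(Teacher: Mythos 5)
Your necessity direction is fine, but the sufficiency direction --- which is the entire content of the proposition --- is never actually established: you list candidate strategies (apolarity, polarization, deformation plus Hurwitz) without carrying any of them out, and the one concrete mechanism you propose is fallacious. If $T(p)$ fails to be hyperbolic and you deform $p$ continuously to $x^m$, Hurwitz' theorem only tells you that the zeros of $T(p_t)$ vary continuously with $t$; it does not prevent a conjugate pair of non-real zeros of $T(p_t)$ from colliding onto the real axis exactly at the endpoint of the path. So non-hyperbolicity of $T(p)$ does not propagate to non-hyperbolicity of $T(x^m)$ by continuity alone, and the desired contradiction does not follow. Likewise, the remark that every hyperbolic $p$ of degree $m$ "can be obtained from $x^m$ by a sequence of operations" does not help unless those operations commute with $T$, which they do not in general. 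As you yourself note, a single test polynomial does not obviously generate the whole cone, and nothing in the sketch resolves this.

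For comparison, the paper disposes of the proposition in one line by invoking the algebraic characterization of finite-degree hyperbolicity preservers (Theorem 2 of Borcea--Br\"and\'en \cite{julius1}): apart from degenerate rank $\leq 2$ cases, a linear operator on polynomials of degree at most $m$ preserves hyperbolicity if and only if its bivariate symbol $T\bigl((x+y)^m\bigr)$ is real stable, or becomes so after $y \mapsto -y$. For a constant-coefficient operator $T=\sum_j a_j\,d^j/dx^j$ one computes
$$
T\bigl((x+y)^m\bigr)=\sum_{j} a_j\,\frac{m!}{(m-j)!}\,(x+y)^{m-j}=h(x+y), \qquad h:=T(x^m),
$$
and a bivariate polynomial of the form $h(x+y)$ is real stable precisely when $h$ is hyperbolic. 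That single computation is the mechanism your sketch is missing; without it (or an equivalent substitute, such as a genuine Grace-apolarity argument), the proposal does not prove the proposition.
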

Proposition \label{pr:finitedegree}  follows immediately from the algebraic characterization of hyperbolicity preservers, Theorem 2 of \cite{julius1}. 

The role of monomials $x^m$ in the finite difference setting is often played by the Pochhammer polynomials \eqref{eq:Poch}. 
In particular, Proposition~\ref{pr:finitedegree} might have the following conjectural analog in the finite difference setting.

\begin{conjecture}\label{pr:finitedegreeDif} A difference operator $T(p(x))=a_0p(x)+a_1p(x-1)+\cdots+a_kp(x-k)$ with constant coefficients preserves the set of  hyperbolic polynomial of degree at most $m$ whose mesh is at least one  if and only if the polynomial $T((x)_m)$ is hyperbolic and has mesh at least one.  
\end{conjecture}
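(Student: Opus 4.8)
The plan is to follow the template that proves the differential finite-degree statement (Proposition~\ref{pr:finitedegree}), replacing the monomial $x^m$ by the Pochhammer polynomial $(x)_m$ throughout. The necessity direction is immediate: the polynomial $(x)_m=x(x-1)\cdots(x-m+1)$ lies in $\HP_{\ge 1}$ (its roots are $0,1,\dots,m-1$, so its mesh equals $1$) and has degree $m$, so if $T$ preserves degree-$\le m$ polynomials of mesh at least one, then $T((x)_m)$ must itself be hyperbolic with mesh at least one. Thus only the sufficiency direction requires work, and the goal is to show that the single ``extremal'' datum $T((x)_m)\in\HP_{\ge 1}$ forces $T(p)\in\HP_{\ge 1}$ for every $p\in\HP_{\ge 1}$ with $\deg p\le m$.

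The first step is to compute the bivariate symbol. Using the Vandermonde convolution for falling factorials, $(x+y)_m=\sum_{k=0}^m\binom{m}{k}(x)_k(y)_{m-k}$, together with the fact that $T$ consists only of shifts in $x$ and therefore turns a function of $x+y$ into a function of $x+y$, one gets
$$
T\big[(x+y)_m\big]=\sum_{j=0}^k a_j\,(x+y-j)_m = g(x+y),\qquad g(x):=T((x)_m).
$$
This is exactly parallel to the differential case, where $T[(x+y)^m]=h(x+y)$ with $h=T(x^m)$: the entire preservation problem is encoded in the univariate polynomial $g$ evaluated on the lines $x+y=\mathrm{const}$. I would then attempt to run the Borcea--Br\"and\'en machinery of \cite{julius1} in this Pochhammer-adapted form, namely to prove a finite-difference master theorem asserting that $T$ preserves $\HP_{\ge 1}$ on degree $\le m$ if and only if the symbol $T[(x+y)_m]$ enjoys an appropriate bivariate ``mesh-stability,'' and then to verify that $g(x+y)$ has that property precisely when $g\in\HP_{\ge 1}$.

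An alternative, more self-contained route mimics the proof of Theorem~\ref{th:FDRiesz}: reduce preservation of $\HP_{\ge 1}$ to preservation of proper position between consecutive test polynomials, and propagate the interlacings with Lemma~\ref{cone}. Here the operators $\Delta$ and $\nabla$ commute with $T$, and since $\nabla (x)_m = m(x)_{m-1}$, one can relate the images $T((x)_k)$ for consecutive $k$ and hope to bootstrap the required interlacings from $g=T((x)_m)\in\HP_{\ge 1}$ by induction on degree, expressing a general mesh-$\ge 1$ polynomial in the Pochhammer basis and moving along a path of factorizations exactly as in \eqref{ab}.

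The main obstacle is that neither route comes ready-made. There is no existing Borcea--Br\"and\'en-type master theorem for the class $\HP_{\ge 1}$; even formulating the correct bivariate mesh-stability condition and proving the equivalence (presumably through a limiting argument from genuinely multivariate interlacing statements) is a substantial undertaking, which is why the statement is left as a conjecture. The difficulty is already visible for $m=2$: writing a degree-$2$ mesh-$\ge 1$ polynomial with root gap $d\ge 1$ as $(x)_2-(d-1)(x)_1$ yields $T(p_d)=g(x)-(d-1)\,T((x)_1)$, so one must keep this entire family in $\HP_{\ge 1}$ uniformly in $d$, which by Lemma~\ref{cone} requires the interlacing $T((x)_1)\ll g$ together with a shifted companion relation. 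The hypothesis $g\in\HP_{\ge 1}$ does not obviously deliver these, and extracting exactly the right amount of structure on the lower images $T((x)_k)$ from the single extremal condition on $T((x)_m)$ is the heart of the matter.
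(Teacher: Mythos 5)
This statement is stated in the paper as a \emph{conjecture}: the authors give no proof of it, only the remark that its differential analog (Proposition~\ref{pr:finitedegree}) follows from the algebraic characterization of hyperbolicity preservers in \cite{julius1}, and an equivalent reformulation in terms of the product $p\bullet q$. So there is no proof in the paper to compare yours against, and your proposal does not close the gap either --- by your own admission the sufficiency direction is left unproved.

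What you do have is correct as far as it goes. The necessity direction is sound: $(x)_m$ has roots $0,1,\dots,m-1$, hence lies in $\HP_{\ge 1}$ and has degree $m$, so its image under a preserver must again be hyperbolic of mesh at least one. The symbol computation $T[(x+y)_m]=g(x+y)$ with $g=T((x)_m)$ is also correct, and it is a reasonable analog of the key identity behind the finite-degree Borcea--Br\"and\'en criterion. But the decisive step --- showing that $g\in\HP_{\ge 1}$ forces $T$ to preserve $\HP_{\ge 1}$ in degree $\le m$ --- is exactly the content of the conjecture, and neither of your two routes delivers it: there is no master theorem characterizing preservers of $\HP_{\ge 1}$ by a bivariate stability condition on the symbol, and the interlacing/cone bootstrap from the proof of Theorem~\ref{th:FDRiesz} requires proper-position relations among the images $T((x)_k)$ for $k<m$ that do not visibly follow from the single hypothesis on $T((x)_m)$ (your $m=2$ analysis makes this concrete). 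Your write-up is an honest assessment of the obstruction rather than a proof; the statement remains open, consistent with its status in the paper.
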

There is an alternative formulation of Conjecture \ref{pr:finitedegreeDif} which is maybe more attractive. Let 
$\nabla p(x)= p(x+1)-p(x)$ be the forward difference operator, and consider the following product on the space of polynomials of degree at most $d$:
$$
(p \bullet q)(x) = \sum_{k=0}^d (\nabla^kp)(0) \cdot(\nabla^{d-k}q)(x).
$$
Conjecture \ref{pr:finitedegreeDif} is equivalent to
\begin{conjecture}\label{pr:finitedegreeDif2} If $p$ and $q$ are hyperbolic polynomials of degree at most $d$ and of mesh $\geq 1$, then so is $p \bullet q$. 
\end{conjecture}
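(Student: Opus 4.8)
The plan is to pass to the equivalent operator form of Conjecture~\ref{pr:finitedegreeDif} and work inside the proper-position calculus of Lemma~\ref{cone}, exactly as in the proofs of Theorem~\ref{th:FDRiesz} and Proposition~\ref{claim2}. Writing $p(x)=\sum_i a_i(x)_i$ one checks that $\nabla(x)_i=i\,(x)_{i-1}$ and $(\nabla^kp)(0)=k!\,a_k$, so the bullet product is the action of a constant-coefficient operator in $\nabla$,
\[
(p\bullet q)(x)=P_p(\nabla)\,q(x),\qquad P_p(t)=\sum_{k=0}^d(\nabla^kp)(0)\,t^{\,d-k}.
\]
In the Pochhammer basis $(x)_m\bullet(x)_n=\frac{m!\,n!}{(m+n-d)!}(x)_{m+n-d}$, so $\bullet$ is symmetric and $p\bullet q$ has degree at most $\deg p+\deg q-d$. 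Peeling off the $k=0$ term yields the recursion $p\bullet_d q=p(0)\,\nabla^d q+(\nabla p)\bullet_{d-1}q$ (I write $\bullet_d$ for $\bullet$ to record the dependence on $d$). Since a polynomial lies in $\HP_{\ge1}$ precisely when $r(x)\ll r(x-1)$, the goal is to prove $(p\bullet q)(x)\ll(p\bullet q)(x-1)$, and I would attempt this by induction on $d$, strengthening the statement to a proper-position assertion (``$p\bullet q$ interlaces an explicit companion polynomial'') in the same way that Theorem~\ref{th:FDRiesz} is proved by upgrading mesh-preservation to preservation of $\ll$.

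For the inductive step I would feed in the recursion together with the fact, a shifted instance of Theorem~\ref{th:FDRiesz}, that $\nabla$ sends $\HP_{\ge1}$ into $\HP_{\ge1}$ while lowering the degree by one; thus $\nabla p\in\HP_{\ge1}$ has degree $\le d-1$ and $\nabla^dq$ is a constant. One would then try to control $(\nabla p)\bullet_{d-1}q$ inductively and re-add $p(0)\,\nabla^dq$ via Lemma~\ref{cone}(b). The companion recursion $(\nabla p)\bullet_{d-1}q=(\nabla^dp)(0)\,q+(\nabla p)\bullet_{d-2}(\nabla q)$ lets one also differentiate $q$, so that in principle both arguments can be shrunk toward the base cases, which are trivial.

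The step I expect to be the real obstacle, and presumably the reason this is stated only as a conjecture, is twofold. First, the recursion does \emph{not} lower the degree of the output: because $q$ still has degree $d$, the term $(\nabla p)\bullet_{d-1}q$ has degree $d$ (its top piece is $(\nabla^dp)(0)\,q$), so it is not covered by the inductive hypothesis at level $d-1$, which applies only to polynomials of degree $\le d-1$. Second, and more seriously, reconstructing $p\bullet q$ from the inductively controlled part leaves the bare constant $p(0)\,\nabla^dq$, and adding a bare constant does \emph{not} preserve hyperbolicity in general; hence membership in $\HP_{\ge1}$ is too weak an invariant, and a correct argument must carry enough extra proper-position data to pin down why that particular constant is harmless. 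A tempting shortcut — factoring $P_p$ and applying Theorem~\ref{th:HerPou} to first- and second-order factors — is unavailable: already $p=(x-10)(x-11)\in\HP_{\ge1}$ gives $P_p(t)=110t^2-20t+2$ with non-real zeros, so $P_p(\nabla)$ cannot be split into the proven mesh-preservers, and indeed $P_p(\nabla)$ need not preserve $\HP_{\ge1}$ in all degrees. The finite-degree criterion for when it does is exactly Conjecture~\ref{pr:finitedegreeDif} itself, so this route is circular; the argument must genuinely exploit that $q$, not merely $p$, lies in $\HP_{\ge1}$ with degree $\le d$.

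As a backup I would keep the structural observation that the Brenti map $\Phi\colon x^i\mapsto(x)_i$ of Lemma~\ref{brenti} carries $\bullet$ to the classical \emph{diamond} product $a\diamond b=\sum_k a^{(k)}(0)\,b^{(d-k)}(x)$ (the structure constants agree), which in examples preserves both hyperbolicity and mesh; one could try to establish the diamond statement by applying the multivariate symbol method of \cite{julius1} to $a(t)b(x+t)$ and then transport back through $\Phi$. The catch — which is why I do not expect this to be routine — is that $\Phi$ fails to respect the mesh condition for roots of mixed sign (for instance $\Phi^{-1}((x-10)(x-11))=x^2-20x+110$ is not even hyperbolic), so the image of $\HP_{\ge1}$ under $\Phi^{-1}$ is an awkward class and the transport does not close up. Both routes thus converge on the same essential difficulty: controlling the mesh of $p\bullet q$ when the roots of $p$ and $q$ can have either sign.
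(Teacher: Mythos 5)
The first thing to say is a point of calibration: the statement you were asked to prove is left \emph{open} in the paper --- it appears there as Conjecture~\ref{pr:finitedegreeDif2}, with no proof, only the remark that it is equivalent to Conjecture~\ref{pr:finitedegreeDif}. Your submission is, by its own honest account, not a proof either: it builds the right machinery, proposes two lines of attack, and then explains why each one fails to close. So the verdict must be that there is a genuine gap, namely the entire argument; no inductive invariant is ever formulated that survives the step of re-adding the constant term. That said, your reconnaissance is accurate. The identity $(p\bullet q)(x)=P_p(\nabla)q(x)$ with $P_p(t)=\sum_{k}(\nabla^kp)(0)\,t^{d-k}$, the structure constants $(x)_m\bullet(x)_n=\frac{m!\,n!}{(m+n-d)!}(x)_{m+n-d}$ (hence symmetry and the degree bound $\deg p+\deg q-d$), the peeling recursion $p\bullet_d q=p(0)\nabla^dq+(\nabla p)\bullet_{d-1}q$, and the example $p=(x-10)(x-11)$, for which $P_p(t)=110t^2-20t+2$ has non-real zeros while $\Phi^{-1}(p)=x^2-20x+110$ is not hyperbolic, all check out. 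In particular you are right that one cannot factor $P_p(\nabla)$ into the first-order mesh-preservers of Theorem~\ref{th:FDRiesz} or appeal to Theorem~\ref{th:HerPou}, since that theorem governs preservation of $\HP_{\ge1}$ in \emph{all} degrees and its hypothesis on the symbol is simply false here; and you are right that the finite-degree criterion you would need is precisely Conjecture~\ref{pr:finitedegreeDif}, so that route is circular.

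The two obstructions you isolate are the real ones. Membership in $\HP_{\ge1}$ is not a usable inductive invariant because adding the bare constant $p(0)\nabla^dq$ can destroy hyperbolicity, so any induction must carry extra proper-position data in the spirit of Lemma~\ref{cone}, and you do not exhibit data that propagates through the recursion (which, as you note, does not even lower the degree of the output). And the transport through Brenti's map $\Phi:x^i\mapsto(x)_i$ of Lemma~\ref{brenti} breaks down because $\Phi^{-1}$ does not preserve hyperbolicity once roots leave the positive ray, exactly as your example shows. A constructive suggestion: before investing further in either route, it would be worth (i) verifying the conjecture exhaustively for small $d$, where $p\bullet q$ can be analyzed directly, and (ii) proving the restricted statement for $p,q\in\HP_{\ge1}^{+}$, where Lemma~\ref{brenti}, Theorem~\ref{phi-k}, and the symbol methods of \cite{julius1} are actually available; a positive answer there would at least localize the difficulty to the mixed-sign case you correctly identify as the crux.
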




%

\end{document}